\newtheorem{thm}{Theorem}[section]
\newtheorem{theorem}{Theorem}[section]
\theoremstyle{plain}
\newtheorem{lemma}[thm]{Lemma}
\theoremstyle{definition}
\newtheorem{remark}[thm]{Remark}
\newtheorem{rem}[thm]{Remark}
\definecolor{A}{rgb}{.75,1,.75}
\newcommand{\rarrow}{\rightarrow}
\newcommand{\disp}{\displaystyle}
\newcommand{\be}{\begin{equation}}
\newcommand{\ee}{\end{equation}}
\newcommand{\R}{\mathbb R}
\newcommand{\pa}{\partial}
\newcommand{\pt}{\frac{\partial}{\partial t}}
\newcommand\vv[1]{\mathbf{#1}}
\newcommand\ol[1]{\overline{#1}}
\newcommand{\st}{\text{ such that }}
\title[Recover All Coefficients]{Recovering All Coefficients in the Schr\"{o}dinger Equation With Finite Sets of Boundary Measurements}
\author[Shitao Liu and Antonio Pierrottet]{Shitao Liu and Antonio Pierrottet}
\address{School of Mathematical and Statistical Sciences, Clemson University, Clemson, SC 29634}
\email{liul@clemson.edu}
\email{ampierr@g.clemson.edu}
\begin{document}

\begin{abstract}
We consider an inverse problem of recovering all spatial dependent coefficients in the time dependent Schr\"odinger equation defined on an open bounded domain in $\mathbb{R}^n$, $n\geq 2$, with smooth enough boundary. We show that by appropriately selecting a finite number of initial conditions and a fixed Dirichlet boundary condition, we may recover all the coefficients in a Lipschitz stable fashion from the corresponding finitely many boundary measurements made on a portion of the boundary. The proof is based on a direct approach, which was introduced in \cite{HIY2020}, to derive the stability estimate directly from the Carleman estimates without any cut-off procedure or compactness-uniqueness argument.
\end{abstract}

\maketitle

\medskip
{\bf Keywords}: Inverse problem, Schr\"odinger equation, finite measurements, Carleman estimates, Lipschitz stability

\medskip
{\bf 2010 Mathematics Subject Classifications}: 35R30; 35Q41

\section{Introduction and Main Results}\label{secIntro}

In this paper we focus on the inverse problem for the Sch\"odinger equation defined on an open and bounded domain $\Omega$ in $\mathbb{R}^n$, $n\geq 2$, with smooth enough (e.g., $C^2$) boundary $\Gamma=\partial \Omega$. Specifically, consider the general Schr\"odinger equation for $w = w(t,x)$ defined on $Q^T = [0,T]\times\Omega$, $T>0$, 
along with initial condition $w_0$ on $\Omega$ and Dirichlet boundary condition $h$ on $\Sigma^T =[0,T]\times\Gamma$ that are given in appropriate function spaces:
\begin{align}
    &\begin{cases}
        iw_t+q_2(x)\Delta w+\vv{q}_1(x)\cdot\nabla w+q_0(x)w=0&\text{in }\ Q^T\\[2mm]
        w(0,x)=w_0(x)&\text{in  }\ \Omega\\[2mm]
        w(t,x)=h(t,x)&\text{in  }\ \Sigma^T.
    \end{cases}\label{wq2PDE}
\end{align}
Here we assume the electric potential $q_0(x)$, magnetic potential $\vv{q}_1(x)$, and the coefficient $q_2(x)$ appearing in the principal part are all real-valued and satisfy
\begin{equation}\label{qReg}
\begin{cases}
    q_0=q_0(x)\in L^\infty(\Omega), \ \vv{q}_1=\vv{q}_1(x)\in \left(C^{1}(\Omega)\right)^n,\ \mbox{and} \\[2mm]
    q_2\in \mathcal{C}=\left\{q\in C^{1}(\Omega): \ \exists\ q^*>0 \st (q^*)^{-1}\leq q(x)\leq q^*, \ \forall x\in\Omega\right\}.
\end{cases}   
\end{equation}

In the following we also use the admissible set $\mathcal{A}_M$ defined as 
\begin{equation}\label{admissible}
    \mathcal{A}_M = \{f\in H^1_0(\Omega): \ \|f\|_{H^1_0(\Omega)}\leq M\}.
    \end{equation}
where $M>0$ is a fixed positive constant.
In addition, we denote $\Gamma_0$ as the open subset of $\Gamma$ where we do not have access to, and $\Gamma_1$ as the open subset of $\Gamma$ on which measurements are taken. We assume $\Gamma=\ol{\Gamma_0\bigcap\Gamma_1}$ and $\Gamma_0 \cap \Gamma_1 = \varnothing$.

We then consider the following inverse problem for the system \eqref{wq2PDE}: Recover all together the coefficients $q_0(x)$, $\vv{q}_1(x)$ and $q_2(x)$ from measurements of Neumann boundary traces of the solution $w^{(\textbf{q})} = w(w_0,h, q_2,\vv{q}_1,q_0)$ over the observed part of the boundary $\Gamma_1$ and over the time interval $[0,T]$, namely, over $\Sigma_1^T=[0,T]\times\Gamma_1$.

To make the observed part of the boundary $\Gamma_1$ more precise, in the following we assume the following standard geometrical assumptions on the domain $\Omega$ and the unobserved part of the boundary $\Gamma_0$:

\medskip
(A1) There exists a strictly convex function $d: \overline{\Omega} \rarrow \mathbb{R}^+$ of class $C^3(\overline{\Omega})$ in the metric $g=q_2^{-1}(x)dx^2$ such that the following two properties hold true: 

\smallskip
(i) The normal derivative of $d$ on the unobserved part $\Gamma_0$ of the boundary is non-positive. Namely,
\begin{equation*}
\frac{\pa d}{\pa\nu}=\langle {D}d(x), \nu(x)\rangle\leq 0, \quad  \forall x\in\Gamma_0,
\end{equation*}
where ${D}d=\nabla_g d$ for the scalar function $d$ is the gradient vector field on $\Omega$ with respect to the metric $g$, $\langle X, Y \rangle = g(X, Y)$, $\forall X, Y\in T_x\Omega$, where $T_x\Omega$ is the tangent space at $x\in\Omega$, and $\nu$ is the unit outward normal field along the boundary $\Gamma$.  

\medskip
(ii) There exists a positive constant $\rho>0$ such that the Hessian of $d$ (a second-order tensor), $D^2d$, satisfies
\begin{equation*} {D}^2d(X,X) = \langle {D}_X({D}d), X\rangle\geq \rho\langle X, X\rangle=\rho|X|_g^2, \ \forall X\in T_x\Omega\end{equation*}
where ${D}_X$ is the covariant derivative of a vector field with respect to $X$.

\medskip
(A2) 
$d(x)$ has no critical point on $\overline{\Omega}$. In other words,
\begin{equation*}
\inf_{x\in\overline{\Omega}}|{D}d|=p>0. 
\end{equation*}

\begin{remark}\label{rem2}
(1) The geometrical assumptions above permit the construction of a vector field that enables a pseudo-convex function necessary for allowing a Carleman estimate containing no lower-order terms for the time dependent Schr\"{o}dinger equation (\ref{wq2PDE}). In fact, this applies to the general Schr\"{o}dinger equation defined on a Riemannian manifold with a general metric $g$ (see Section 2). In our present case, $\mathbb{R}^n$ becomes a Riemannian manifold with the metric $g=q_2^{-1}(x)dx^2$, so the tangent space $T_x\Omega=\mathbb{R}_x^n = \mathbb{R}^n$. As a consequence, we have ${D}d = q_2(x)\nabla d$ and hence the condition (i) is actually equivalent to the Euclidean normal derivative of $d$ on $\Gamma_0$ is non-positive since $q_2(x)$ is assumed to be strictly positive.

(2) The above type of geometrical assumptions were formulated as early as in H\"{o}rmander's classical work on partial differential operators of principal types \cite{H1963}. 
Specifically they were also formulated in 
\cite{TX2007} under the general Riemannian geometry framework. For examples and detailed illustrations of large general classes of domains and boundaries $\{\Omega, \Gamma_1, \Gamma_0\}$ satisfying the aforementioned assumptions we refer to \cite[Section 10]{TX2007}. 
\end{remark}

The main result of the paper is as follows.

\begin{theorem}\label{thmIP}
    Under the geometrical assumptions (A1) and (A2), let $T>0$. 
    Suppose that the initial condition $w_0$ and boundary condition $h$ are in the following function spaces
    \begin{align}
        w_0\in H^{\gamma}(\Omega), \quad h\in H^{(\gamma+1)/2}(\Sigma^T), \quad \mbox{where} \ \gamma>\frac n2 + 4 \label{w0RegCond}
    \end{align}
    along with all compatibility conditions (trace coincidence) which make sense. In addition, suppose the following positivity condition: There exists a positive constant $r_0>0$ such that 
    \begin{align}
        r_0\leq|\text{det }W_{\bf w_0}(x)|,\ x\in\Omega\text{ a.e.}\label{Wpos}
    \end{align}
    where $W_{\bf w_0}(x)$ is the $(n+2)\times (n+2)$ matrix defined by
    \begin{align*}
    W_{\bf w_0}(x)=\begin{bmatrix}
        w_0^{(1)}  & \partial_{x_1}w_0^{(1)} & \hdots & \partial_{x_n}w_0^{(1)} & \Delta w_0^{(1)} \\[2mm]
        \vdots & \vdots  & \ddots & \vdots & \vdots\\[2mm]
        w_0^{(n+2)}  & \partial_{x_1}w_0^{(n+2)} & \hdots & \partial_{x_n}w_0^{(n+2)} & \Delta w_0^{(n+2)}
    \end{bmatrix}\label{Wr}
    \end{align*}
    in which $\{w_0^{(i)}\}_{i=1}^{n+2}$ are $n+2$ initial conditions for (\ref{wq2PDE}) that satisfy the regularity assumption in \eqref{w0RegCond}.
    
    With the initial and boundary conditions $\{w_0^{(i)},h\}$, let $w^{(\vv{q},i)}$ be the solution to \eqref{wq2PDE} with the coefficients $\{q_2,\vv{q}_1,q_0\}$ and $w^{(\vv{p},i)}$ be the solution to \eqref{wq2PDE} with the coefficients $\{p_2,\vv{p}_1,p_0\}$ respectively. Then there exists a positive constant $C>0$ which depends on $\Omega,$ $T,$ $\Gamma_1,$ $q_2,$ $\vv{q}_1,$ $q_0,$ $\{w_0^{(i)}\}_{i=1}^{n+2}$, $h$ such that
    \begin{align*}
        \|p_2-q_2\|_{L^2(\Omega)}^2+&\|\vv{p}_1-\vv{q}_1\|_{\textbf{L}^2(\Omega)}^2+\|p_0-q_0\|_{L^2(\Omega)}^2
           \leq C\sum^{n+2}_{i=1}\left\|\frac{\partial w_t^{(\vv{q},i)}}{\partial \nu}-\frac{\partial w_t^{(\vv{p},i)}}{\partial \nu}\right\|^2_{L^2(\Sigma_1^T)},
    \end{align*}
    for all such coefficients satisfying (\ref{qReg}) and $p_2-q_2, p_0-q_0\in \mathcal{A}_M$, $\vv{p}_1-\vv{q}_1\in \left(\mathcal{A}_M\right)^n$, where $\mathcal{A}_M$ is an admissible set defined in (\ref{admissible}) and $\|\cdot\|_{\textbf{L}^2(\Omega)}$ is defined as 
    $$ \|{\bf r}\|_{\textbf{L}^2(\Omega)} = \left( \int_\Omega\sum^n_{i=1}|r_i(x)|^2dx \right)^{1/2}, \ \mbox{for} \ \ {\bf r}(x) = (r_1(x), \cdots, r_n(x)).$$
\end{theorem}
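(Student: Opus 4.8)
The plan is to follow the Bukhgeim--Klibanov strategy in the direct Carleman formulation of \cite{HIY2020}. First I would introduce the differences $u^{(i)}=w^{(\vv{q},i)}-w^{(\vv{p},i)}$ and the coefficient differences $f_2=q_2-p_2$, $\vv{f}_1=\vv{q}_1-\vv{p}_1$, $f_0=q_0-p_0$. Subtracting the two copies of \eqref{wq2PDE} shows that each $u^{(i)}$ solves a Schr\"odinger equation with principal coefficient $q_2$, homogeneous initial and boundary data, and source
\[
f^{(i)}=-\left( f_2\,\Delta w^{(\vv{p},i)}+\vv{f}_1\cdot\nabla w^{(\vv{p},i)}+f_0\,w^{(\vv{p},i)} \right).
\]
Since the coefficients are time-independent I differentiate in $t$ and set $y^{(i)}=u^{(i)}_t$, which again solves a Schr\"odinger equation (with source $\partial_t f^{(i)}$, homogeneous boundary data) whose Neumann trace on $\Sigma_1^T$ is exactly the measured quantity $\partial_\nu w_t^{(\vv{q},i)}-\partial_\nu w_t^{(\vv{p},i)}$. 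Evaluating the difference equation at $t=0$, where $u^{(i)}(0,\cdot)\equiv 0$ together with all its spatial derivatives, yields the crucial identity
\[
y^{(i)}(0,x)=i\left( f_0\,w_0^{(i)}+\vv{f}_1\cdot\nabla w_0^{(i)}+f_2\,\Delta w_0^{(i)} \right),\qquad i=1,\dots,n+2.
\]

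Second, I read these $n+2$ identities as a single linear system $\tfrac{1}{i}Y(0,x)=W_{\mathbf{w_0}}(x)\,F(x)$ for the stacked unknown $F(x)=\big(f_0,(\vv{f}_1)_1,\dots,(\vv{f}_1)_n,f_2\big)^{\top}(x)$, where $Y(0,x)$ collects the $y^{(i)}(0,x)$. The regularity hypothesis and Sobolev embedding bound the entries of $W_{\mathbf{w_0}}$, while the positivity hypothesis \eqref{Wpos} bounds $\det W_{\mathbf{w_0}}$ away from zero; together these give the two-sided pointwise bound $c\,|F(x)|\le |Y(0,x)|\le C\,|F(x)|$. This reduces the theorem to controlling the $t=0$ slices $\|y^{(i)}(0,\cdot)\|_{L^2(\Omega)}$ by the boundary data, and it is the two-sided nature of this bound that will drive the final absorption.

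Third comes the analytic heart. I would invoke the global Carleman estimate for the time-dependent Schr\"odinger operator built from the pseudo-convex weight $\varphi$ furnished by (A1)--(A2) (Section 2), applied to each $y^{(i)}$ on $Q^T$, using the time-reversibility of the equation to extend evenly across $t=0$ so that $t=0$ is an interior time. The sign condition $\partial d/\partial\nu\le 0$ on $\Gamma_0$ discards the unobserved boundary contributions and retains only the controllable trace on $\Sigma_1^T$. The direct approach of \cite{HIY2020} replaces the usual cut-off/energy step: integrating the time-boundary terms of the Carleman identity by parts with a weight designed to peak at $t=0$ places $\sum_i\int_\Omega e^{2\tau\varphi(0,x)}|y^{(i)}(0,x)|^2\,dx$ on the favorable side. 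The source $\partial_t f^{(i)}$ is a pointwise product of the components of $F$ with $w_t^{(\vv{p},i)}$, $\nabla w_t^{(\vv{p},i)}$, $\Delta w_t^{(\vv{p},i)}$, and the threshold $\gamma>\tfrac n2+4$ in \eqref{w0RegCond} is exactly what makes these bounded in $L^\infty(Q^T)$ (one loses two spatial derivatives to $\partial_t$ and two more to $\Delta$, then embeds $H^{\gamma-4}\hookrightarrow L^\infty$). Hence $\sum_i\int_{Q^T}e^{2\tau\varphi}|\partial_t f^{(i)}|^2\le C\int_\Omega|F(x)|^2\big(\int_0^T e^{2\tau\varphi(t,x)}\,dt\big)\,dx$, and since $\varphi$ peaks at $t=0$ the inner time integral localizes near $t=0$ with a gain in $\tau$. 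Using $c|F|\le|Y(0,\cdot)|$ on the left and $|\partial_t f^{(i)}|\le C|F|$ on the right, for $\tau$ large the large-parameter gain dominates the source, which is absorbed into the left-hand side; what remains is $\int_\Omega e^{2\tau\varphi(0,x)}|F|^2\,dx\le C\sum_i\|\partial_\nu y^{(i)}\|^2_{L^2(\Sigma_1^T)}$, and since $e^{2\tau\varphi(0,\cdot)}$ is bounded below on $\overline\Omega$ for fixed $\tau$, this gives the asserted Lipschitz estimate with $\partial_\nu y^{(i)}=\partial_\nu w_t^{(\vv{q},i)}-\partial_\nu w_t^{(\vv{p},i)}$.

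I expect the main obstacle to be this third step: establishing (or correctly citing and adapting) a Carleman estimate for the Schr\"odinger equation that simultaneously (a) carries no lower-order terms, (b) retains boundary contributions only on $\Gamma_1$ via the sign of $\partial d/\partial\nu$ on $\Gamma_0$, and (c) in the direct formulation isolates the interior time slice $\|y^{(i)}(0,\cdot)\|_{L^2(\Omega)}$ on the correct side without a cut-off. The delicate bookkeeping is to verify that the $\tau$-dependence of the weighted source norm is genuinely dominated by the Carleman gain, so that absorption is legitimate, and to confirm that the well-posedness and regularity of the $w^{(\vv{p},i)}$ survive the time differentiation at the order needed to place $\Delta w_t^{(\vv{p},i)}$ in $L^\infty(Q^T)$.
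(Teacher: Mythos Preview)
Your proposal follows essentially the same route as the paper: reduce to an inverse source problem for $u^{(i)}=w^{(\vv q,i)}-w^{(\vv p,i)}$, differentiate in $t$, invert the matrix $W_{\bf w_0}$ at $t=0$, extend to $[-T,T]$ (by conjugate reflection $u(x,t)=\overline{u(x,-t)}$ rather than an even extension, since the coefficients are real), and combine a weighted identity for the $t=0$ slice with the Carleman estimate of Theorem~\ref{thmCE} so that the weighted source term is $o(1)$ times the left-hand side and can be absorbed.

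The one genuine omission is the fate of the time-endpoint terms. Both the Carleman estimate \eqref{carleman} and the weighted identity that produces $\int_\Omega e^{2\tau\phi(0,x)}\|{\bf u}_t(0,x)\|^2\,dx$ leave residual contributions of the form $C\tau e^{-2\tau\delta}\bigl[\mathbb E_{{\bf u}_t}(T)+\mathbb E_{{\bf u}_t}(-T)\bigr]$ and $\int_\Omega e^{2\tau\phi(-T,x)}\|{\bf u}_t(-T,x)\|^2\,dx$, and these are \emph{not} absorbed by the $t=0$ slice. In the paper they are instead bounded via the energy estimate \eqref{energyestimate} by $C\bigl(\|{\bf S}_t\|_{H^1(-T,T;L^2(\Omega))}^2+\|{\bf S}(0,\cdot)\|_{H^1_0(\Omega)}^2\bigr)\le CM^2$, and this is precisely where the admissible-set hypothesis $f_0,f_2\in\mathcal A_M$, $\vv f_1\in(\mathcal A_M)^n$ enters: it is what forces ${\bf S}(0,\cdot)\in H^1_0(\Omega)$ with a uniform bound. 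So the direct approach of \cite{HIY2020} avoids the cut-off and the compactness--uniqueness argument, but it does \emph{not} replace the energy step; without the a~priori $H^1_0$ bound on the coefficient differences the residual term $CM^2\tau e^{-2\tau(\delta+d_0)}$ could not be made to vanish and the Lipschitz constant would depend on the unknowns.
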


\textbf{Inverse source problem.} The first step to solve the above inverse problem is to convert it into the corresponding inverse source problem. Indeed, let
\begin{equation}\label{Uconstruct}
\begin{cases}
    f_2(x)=p_2(x)-q_2(x),\
    \vv{f}_1(x)=\vv{p}_1(x)-\vv{q}_1(x), \  f_0(x)=p_0(x)-q_0(x) \\[2mm]
   R(t,x)=w^{(\mathbf{p})}(t,x), \ u(t,x)=w^{(\mathbf{q})}(t,x)-w^{(\mathbf{p})}(t,x) 
   \end{cases}
\end{equation}

Then we have that $u=u(t,x)$ satisfies the following homogeneous mixed problem:
\begin{align}
    &\begin{cases}
        iu_t+q_2(x)\Delta u+\vv{q}_1(x)\cdot\nabla u+q_0(x)u=S(t,x)&\text{in }Q^T\\[2mm]
        u(0,x)=0&\text{in  }\Omega\\[2mm]
        u(t,x)=0&\text{in  }\Sigma^T,
    \end{cases}\label{uq2PDE}
\end{align}
where
\begin{align*}%
    &S(t,x) = f_2(x)\Delta R(t,x)+\vv{f}_1(x)\cdot\nabla R(t,x)+f_0(x)R(t,x).
\end{align*}

Here we assume $q_2$, $\vv{q}_1$ and $q_0$ are given fixed, and $R=R(t,x)$ is a given function that can be suitably chosen. The inverse source problem is to recover $f_2$, $\vv{f}_1$ and $f_0$ from the Neumann boundary measurements of $u$ over the observed part of the boundary $\Gamma_1$ and over a time interval $[0,T]$. More specifically,
corresponding to Theorem~\ref{thmIP}, we will prove the following stability result for the inverse source problem.

\begin{theorem}\label{thmISP}
    Under the geometrical assumptions (A1) and (A2). 
    Assume the regularity and positivity conditions on $R^{(1)},\hdots,R^{(n+2)}$ such that 
    \begin{align}
        R^{(i)}\in W^{4,\infty}(Q^T),\quad i=1,\hdots,n+2\label{RregCond}
    \end{align}
    and there exists a positive constant $r_0>0$ such that 
    \begin{align}
        r_0\leq|\text{det }U_{\bf R}(x)|,\ x\in\Omega\text{ a.e.}\label{Upos}
    \end{align}
    where $U_{\bf R}(x)$ is the $(n+2)\times(n+2)$ matrix defined by
    \begin{align}
        U_{\bf R}(x)=&\left.\begin{bmatrix}
            R^{(1)}  & \partial_{x_1}R^{(1)} & \hdots & \partial_{x_n}R^{(1)} & \Delta {R^{(1)}} \\[2mm]
            \vdots&\vdots&\vdots&\ddots&\vdots\\[2mm]
            R^{(n+2)}  & \partial_{x_1}R^{(n+2)} & \hdots & \partial_{x_n}R^{(n+2)} & \Delta {R^{(n+2)}}
        \end{bmatrix}\right|_{t=0}\label{Ur}
    \end{align}
    Let each $u^{(i)}$ be the solution to \eqref{uq2PDE} with the function $R^{(i)}$ in the right hand side.
    Then there exists a positive constant $C>0$ that depends on $\Omega,$ $T,$ $\Gamma_1,$ $q_2,$ $\vv{q}_1,$ $q_0,$ $\{R^{(i)}\}_{i=1}^{n+2}$ such that,
    \begin{align*}
         \|f_2\|_{L^2(\Omega)}^2+&\|\vv{f}_1\|_{\textbf{L}^2(\Omega)}^2+\|f_0\|_{L^2(\Omega)}^2
        \leq
        C\sum^{n+2}_{i=1}\left\|\frac{\partial u_t^{(i)}}{\partial \nu}\right\|^2_{L^2(\Sigma_1^T)}
    \end{align*}
    for all $f_2,\ f_0\in \mathcal{A}_M$ and $\vv{f}_1\in \left(\mathcal{A}_M\right)^n$, where $\mathcal{A}_M$ is as defined in (\ref{admissible}).
\end{theorem}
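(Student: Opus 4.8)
The plan is to follow the Bukhgeim--Klibanov strategy in the direct form of \cite{HIY2020}: differentiate the equation once in time, read off the unknown source on the slice $\{t=0\}$ from a pointwise linear-algebra identity, and then recover that slice from the boundary data by a Carleman estimate, with no time cut-off and no compactness--uniqueness argument. Concretely, I would differentiate \eqref{uq2PDE} in $t$ and set $v^{(i)}=u^{(i)}_t$. Since $f_2,\vv{f}_1,f_0$ do not depend on $t$, each $v^{(i)}$ solves the same Schr\"odinger equation with right-hand side $F^{(i)}:=\pa_t S^{(i)}=f_2\Delta R^{(i)}_t+\vv{f}_1\cdot\nabla R^{(i)}_t+f_0 R^{(i)}_t$ and homogeneous Dirichlet data $v^{(i)}|_{\Sigma^T}=0$. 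Evaluating \eqref{uq2PDE} at $t=0$ and using $u^{(i)}(0,\cdot)=0$ gives the pointwise identity
\[
 v^{(i)}(0,x)=-i\,S^{(i)}(0,x)=-i\bigl(R^{(i)}f_0+\textstyle\sum_{j}\pa_{x_j}R^{(i)}f_{1,j}+\Delta R^{(i)}f_2\bigr)\big|_{t=0}.
\]
Stacking $i=1,\dots,n+2$ this reads $\vec v(0,x)=-i\,U_{\bf R}(x)\,\vec f(x)$ with $U_{\bf R}$ the matrix in \eqref{Ur} and $\vec f=(f_0,f_{1,1},\dots,f_{1,n},f_2)^{\top}$. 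The positivity hypothesis \eqref{Upos} makes $U_{\bf R}(x)$ invertible with $\norm{U_{\bf R}(x)^{-1}}\le C/r_0$ uniformly in $x$, so $\abs{\vec f(x)}^2\le C\sum_i\abs{v^{(i)}(0,x)}^2$ pointwise. Integrating over $\Omega$ reduces Theorem~\ref{thmISP} to the interior bound $\sum_i\norm{v^{(i)}(0,\cdot)}_{L^2(\Omega)}^2\le C\sum_i\norm{\pa_\nu v^{(i)}}_{L^2(\Sigma_1^T)}^2$, noting $\pa_\nu v^{(i)}=\pa_\nu u^{(i)}_t$ is exactly the measured quantity.

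\emph{Carleman estimate.} The core is this interior-to-boundary bound. Assumptions (A1)--(A2) furnish, through the pseudo-convex function $d$, the weight $\varphi(t,x)=d(x)-\beta t^2$. I would first extend each $v^{(i)}$ to the cylinder $Q=(-T,T)\times\Omega$ by the conjugate reflection $v(t,x)\mapsto\overline{v(-t,x)}$, which preserves the equation type (all coefficients are real) and the homogeneous Dirichlet data, and doubles the boundary norm on $\Sigma_1^T$; this places the peak of $\varphi$ in the interior at $t=0$. On $Q$ the Carleman estimate for the Schr\"odinger operator $Pv=iv_t+q_2\Delta v$ from the Riemannian framework of \cite{TX2007} has the schematic form
\[
 \tau\!\int_{Q}\!\bigl(\abs{\nabla v}^2+\tau^2\abs{v}^2\bigr)e^{2\tau\varphi}\le C\Bigl[\int_{Q}\abs{Pv}^2e^{2\tau\varphi}+\tau\!\int_{(-T,T)\times\Gamma_1}\!\abs{\pa_\nu v}^2e^{2\tau\varphi}\Bigr]+(\text{terms at }t=\pm T).
\]
Assumption (A1)(i) gives the favourable sign of the unobserved boundary integral over $\Gamma_0$, which is then discarded, and $v|_\Sigma=0$ turns the remaining boundary gradient into $\abs{\pa_\nu v}^2$ on $\Gamma_1$; the lower-order terms $\vv{q}_1\cdot\nabla v+q_0 v$ hidden in $Pv$ are absorbed on the left for $\tau$ large thanks to the gained powers of $\tau$.

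\emph{Extracting the slice and closing the estimate.} To convert the space--time bound into control of $\int_\Omega\abs{v(0,x)}^2e^{2\tau\varphi(0,x)}dx$ I would use the $L^2$ energy identity obtained by pairing the equation with $\bar v\,e^{2\tau\varphi}$ and taking imaginary parts: the principal contribution $\mathrm{Im}\int q_2\Delta v\,\bar v\,e^{2\tau\varphi}$ integrates by parts into terms carrying at most one power of $\tau$, which are dominated by the $\tau^3\abs{v}^2$ and $\tau\abs{\nabla v}^2$ terms on the left of the Carleman estimate. Integrating this identity in $t$ from $0$ to $T$ expresses the $t=0$ slice through the controlled space--time integral plus the endpoint slice at $t=T$; since $\varphi(T,\cdot)-\varphi(0,\cdot)=-\beta T^2$ is a negative constant, this endpoint carries the exponentially small factor $e^{-2\tau\beta T^2}$, and bounding $\norm{v(T)}$ by $\norm{v(0)}$ through well-posedness of the reversible Schr\"odinger flow (the source itself being $\lesssim\norm{\vec f}\lesssim\norm{v(0)}$) lets it be absorbed into the left-hand slice for $\tau$ large. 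Finally the source is handled via $\abs{F^{(i)}(t,x)}\le C\abs{\vec f(x)}$ (valid since $R^{(i)}\in W^{4,\infty}(Q^T)$ by \eqref{RregCond}) and the Gaussian bound $\int_{-T}^{T}e^{-2\tau\beta t^2}dt\le C\tau^{-1/2}$:
\[
 \sum_i\int_{Q}\abs{F^{(i)}}^2e^{2\tau\varphi}\le C\tau^{-1/2}\int_\Omega\abs{\vec f}^2e^{2\tau\varphi(0,\cdot)}\le C\tau^{-1/2}\sum_i\int_\Omega\abs{v^{(i)}(0,\cdot)}^2e^{2\tau\varphi(0,\cdot)},
\]
using the matrix inversion again; for $\tau$ fixed large enough this too is absorbed on the left. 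On the compact $\overline\Omega$ the weight $e^{2\tau\varphi(0,\cdot)}$ lies between positive constants, so after fixing $\tau$ the weights drop out and the stated estimate follows.

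\emph{Main obstacle.} The delicate point is this third step: carrying out the direct, cut-off free extraction of the $\{t=0\}$ slice while keeping every error term $\tau$-subordinate to the Carleman gains, and in particular controlling the $t=\pm T$ endpoints through the exponential smallness of $\varphi$ there rather than by a cut-off (which is precisely what forces the compactness--uniqueness argument in the classical approach). The two quantitative inputs that make the absorption work are the $W^{4,\infty}$ regularity of the $R^{(i)}$, which yields $\abs{F^{(i)}}\le C\abs{\vec f}$ with a $\tau$-uniform constant, and the strict lower bound $\abs{\det U_{\bf R}}\ge r_0$, which converts the slice values $v^{(i)}(0,\cdot)$ back into $\vec f$ so that the source contribution can be reabsorbed into the left-hand side.
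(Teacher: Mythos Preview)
Your overall architecture matches the paper's proof: extend by conjugate reflection to $[-T,T]$, differentiate once in time, invert $U_{\bf R}(x)$ pointwise to get $|\vec f(x)|^2\le C\sum_i|v^{(i)}(0,x)|^2$, derive a weighted identity for $\int_\Omega e^{2\tau\varphi(0,\cdot)}|v(0,\cdot)|^2$ by pairing the equation with $\bar v\,e^{2\tau\varphi}$ (this is exactly the paper's Lemma, equation (\ref{33})), apply the Carleman estimate of \cite{TX2007} to bound the resulting space--time integrals, and absorb the source via the time decay of $e^{-2c\tau t^2}$. Your Gaussian bound $\int_{-T}^{T}e^{-2\tau\beta t^2}\,dt\le C\tau^{-1/2}$ is a slightly sharper variant of the paper's Dominated Convergence argument for the same step.

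The one genuine gap is your treatment of the endpoint terms coming from the Carleman estimate itself. In (\ref{carleman}) these are $C\tau e^{-2\tau\delta}\bigl[\mathbb{E}_{v}(T)+\mathbb{E}_{v}(-T)\bigr]$ with $\mathbb{E}_{v}(t)=\int_\Omega(|v|^2+|Dv|^2)\,dx$, i.e.\ the $H^1$ energy, not merely the $L^2$ norm. Your absorption scheme ``$\|v(T)\|\lesssim\|v(0)\|$ via well-posedness, then absorb into the left-hand slice'' works for the $L^2$ endpoint produced by the slice-extraction identity, but it fails here: propagating the $H^1$ energy through well-posedness requires $\|v(0)\|_{H^1_0}=\|S(0,\cdot)\|_{H^1_0}$, which involves $\nabla f_0,\nabla\vv{f}_1,\nabla f_2$ and is \emph{not} controlled by $\|v(0)\|_{L^2}\sim\|\vec f\|_{L^2}$. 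This is precisely the place where the a~priori hypothesis $f_0,f_2\in\mathcal{A}_M$, $\vv{f}_1\in(\mathcal{A}_M)^n$ enters, and you never invoke it. The paper's route is to apply the energy estimate (\ref{energyestimate}) to the $u_t^{(i)}$-problem (\ref{utPDE}) and obtain $\mathbb{E}_{v}(\pm T)\le C\bigl(\|S(0,\cdot)\|_{H^1_0}^2+\|S_t\|_{H^1(-T,T;L^2)}^2\bigr)\le C\|\vec f\|_{H^1_0}^2\le CM^2$; then the Carleman endpoint contribution is $CM^2\tau e^{-2\tau\delta}\to 0$ as $\tau\to\infty$ and is simply discarded, rather than absorbed. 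With this correction in place, your argument is the paper's.
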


Recovering coefficients in partial differential equations with single or finitely many boundary measurements is an important type of inverse problems and has been well studied in the literature, especially in the context of hyperbolic type equations and systems, see for instance the monographs \cite{BY2017,Isakov2000,Isakov2006,Klibanov2004,Klibanov2021,Liu-T2013} and the substantial lists of references therein. For the time dependent Schr\"{o}dinger equation considered in the present paper, it is known that one can recover the electric potential $q_0$ from a single partial boundary measurement \cite{BP2002,LiuT2011,MOR2008,TZ2015}, even when $q_0$ is complex valued \cite{HKSY2019}, or non-compactly supported \cite{BKS2016}. In the case of the vector valued magnetic potential $\vv{q}_1$, one can recover it alone or together with the electric potential from properly making finite sets of partial boundary measurements \cite{BM2018,HKSY2019,KRS2021}.

Recovering the variable unknown coefficient $q_2$ appearing in the principal part of the Schr\"{o}dinger equation is more challenging and much less studied in the literature. Nevertheless, the similar inverse problem for the wave equation plays a significant role in many practical applications and it has been well investigated \cite{B2004,Liu2013,SU2013}. The typical approach is to rewrite the equation on an appropriate Riemannian manifold so that the principal part becomes differential operators with constant coefficients. In our proof below such a step is also carried out at the beginning of Section 3.

The standard solution methods for recovering coefficients of partial differential equations from a single or finitely many boundary measurements require using the Carleman-type estimates for the underlying equations. To certain extent, all such approaches can be seen as variations or improvements of the so called Bukhgeim–Klibanov (BK) method which was originated in the seminal paper \cite{BK1981}, see also \cite{Klibanov1992}. Our approach to solve the present inverse problem uses a more recent variation of the BK method that combines a Carleman estimate for the Schr\"{o}dinger equation \cite{TX2007} and a post Carleman estimate route introduced in \cite{HIY2020}. 

Another motivation of our paper is our recent work \cite{LPS2023} where we considered the same inverse problem of recovering all coefficients in the general second-order hyperbolic equation from finite sets of boundary measurements. In particular, in \cite{LPS2023} we derived the uniqueness of all the coefficients first based on Carleman estimates, and then use the uniqueness result to achieve stability estimate based on a compactness-uniqueness argument. In the present paper, however, we employ a novel approach to directly derive the stability estimate from Carleman estimates. As mentioned above, this approach is motivated by the recent work \cite{HIY2020}. To the best of our knowledge, this is the first paper treating all the aforementioned coefficients for the Schr\"{o}dinger equation (\ref{wq2PDE}) with a direct approach to derive the stability estimate without having to use any cut-off procedures or compactness-uniqueness arguments.

The rest of the paper is organized as follows. In Section \ref{secTools} we recall some necessary tools to solve the inverse problems. This includes the Carleman estimate for the general Schr\"{o}dinger equation defined on Riemannain manifold, as well as the energy estimate and regularity theory for the Schr\"odinger equation with Dirichlet boundary condition. In Section \ref{secProofs} we provide the proofs of Theorems \ref{thmIP} and \ref{thmISP}, and in Section \ref{secExamples} we give some examples where the positivity condition \eqref{Wpos} is satisfied and some concluding remarks. We also provide a proof of the energy estimates for the Schr\"{o}dinger equation in the appendix at the end.


\section{Carlemen Estimate, Energy Estimate and Regularity Theory for the Schr\"odinger Equation}\label{secTools}
Consider a Riemannian manifold $(\mathcal{M},g)$ with the metric $g(\cdot, \cdot) = \langle\cdot,\cdot\rangle$ and squared norm $|X|^2=g(X,X)$. We define $\Omega$ as an open bounded, connected set of $\mathcal{M}$ with a smooth enough (e.g., $C^2$) boundary $\Gamma=\overline{\Gamma_0\bigcup\Gamma_1}$, where $\Gamma_0\bigcap\Gamma_1=\varnothing$ and $\nu$ is the unit outward normal field along the boundary $\Gamma$. On $\mathcal{M}$ we will denote $\Delta_g$ as the Laplace-Beltrami operator, and $D$ as the Levi-Civita connection.

Consider the following Schr\"odinger equation with energy level terms defined as
\begin{align}
    iw_t(t,x)+\Delta_gw(t,x)+G(w)=F(t,x),\qquad (t,x)\in Q=[-T,T]\times\Omega\label{genPDE}
\end{align}
where the forcing term $F\in L^2(Q)$ and the energy level differential term $G(w)$ is given by 
\begin{align}
    G(w)(t,x)=\langle \mathbf{P}_1(t,x),Dw(t,x)\rangle + P_0(x)w(t,x). \label{ManLoTF}
\end{align}
Here $P_0\in L^\infty(\Omega)$ is a function on $\Omega$, $\mathbf{P}_1\in C^1(Q)$ is a vector field on $\mathcal{M}$, and they satisfy the following estimate: there exists a positive constant $C_T>0$ such that 
\begin{align}
    |G(w)(t,x)|^2\leq C_T[|w(t,x)|^2+|D w(t,x)|^2],\qquad \forall (t,x)\in Q
    \label{LoTFunBound}
\end{align}

\textbf{Pseudo Convex Function}.
Having chosen the function $d(x)$ under the geometrical assumptions (A1) and (A2) with the general metric $g$ here, we define the function 
$\phi(t,x):[-T,T]\times\Omega\to\R$ of the form 
\begin{align}
    \phi(t,x):=d(x)-ct^2,\label{phi}
\end{align}
where $T>0$ is an arbitrary positive constant, and $c$ is chosen such that 
\begin{equation*}\label{cT}
    d_1:=\max_{x\in\overline{\Omega}}d(x) < cT^2
\end{equation*}
so that for some suitable small fixed $\delta>0$
\begin{equation*}\label{delta}
d_1 +\delta < cT^2.
\end{equation*}
From here on we keep $\phi$ fixed with the choice of $d(x),\ T,\ c$ and $\delta$. Under the above conditions $\phi$ can be seen to have the following properties,
\begin{align}
    \phi(-T,x)&=\phi(T,x)\leq  d_1-cT^2<-\delta&&x\in{\Omega}\label{phi_delta}
    \intertext{and}
    \phi(t,x)&=d(x)-ct^2\leq d(x) = \phi(0,x)&&x\in\Omega. \nonumber
\end{align}
\textbf{Carleman estimates}. We now come back to the equation (\ref{genPDE}), and consider the solutions $w=w(t,x)$ in the class
\begin{equation}\label{wclass}
w\in C([-T,T]; H^1(\Omega)), \quad w_t, \ \langle Dw, \nu\rangle\in L^2(-T,T; L^2(\Gamma)).
\end{equation}
Then for these solutions we have the following Carleman estimates as in \cite[Theorem~2.1]{TX2007}.
\begin{theorem}\label{thmCE}
   Under the geometrical assumptions (A1) and (A2) with respect to the general metric $g$. Let $T>0$ and $\phi$ be as defined above. Let $w$ be a solution of (\ref{genPDE}) in the class (\ref{wclass}) and under the assumptions (\ref{ManLoTF}) and (\ref{LoTFunBound}). Then the following one-parameter family of estimates hold true for all sufficiently large $\tau > 0$:
    \begin{align}
        &\left[\delta_0\left(2\tau\rho-\frac12\right)-4C_T\right]\int_{Q} e^{2\tau \phi}|D w|^2dQ
        \nonumber\\&
        +\left[4\tau^3\rho p^2(1-\delta_0)+O(\tau^2)-4C_T\right]\int_{Q}e^{2\tau \phi}\left|w\right|^2dQ
        \nonumber\\ \leq&
        BT(w)+4\int_{Q}e^{2\tau \phi}\left|F\right|^2dQ+C_{d,T}\tau e^{-2\tau\delta} 
                \left[\mathbb{E}_w(T)+\mathbb{E}_w(-T)\right]\label{carleman}
    \end{align}
    where $\delta_0\in(0,1)$ is a constant and the energy function $\mathbb{E}_w$ is defined as 
    \begin{align}
        \mathbb{E}_w(t)=&\int_{Q} \left|w(t,x)\right|^2+|D w(t,x)|^2dQ \label{energy}
    \end{align}
    and the boundary term $BT(w)$ is given by (here $\Sigma = [-T,T]\times\Gamma$)
    \begin{align*}
        BT(w)=& \ 4\tau^3\int_\Sigma e^{2\tau\phi}\langle Dd,\nu \rangle |Dd|^2|w|^2d\Sigma
        \\&
        -4c\tau\int_\Sigma e^{2\tau\phi}t\left[\text{Im}(w)\langle D\text{Re}(w),\nu \rangle-\text{Re}(w)\langle D\text{Im}(w),\nu \rangle\right]d\Sigma
        \\&
        -2\tau\int_\Sigma e^{2\tau\phi}\langle Dd,\nu \rangle\left[\text{Re}(w)_t\text{Im}(w)-\text{Re}(w)\text{Im}(w)_t\right]d\Sigma
        \\&
        +\int_\Sigma e^{2\tau\phi}\left[2\tau^2|Dd|^2-\tau\Delta d\right]\left[\ol{w}\langle Dw,\nu \rangle+w\langle D\ol{w},\nu \rangle\right]d\Sigma
        \\&
        +2\tau\int_\Sigma e^{2\tau\phi}\left\langle Dd,D\ol{w}\langle Dw,\nu \rangle+Dw\langle D\ol{w},\nu \rangle \right\rangle d\Sigma
        \\&
        -2\tau\int_\Sigma e^{2\tau\phi}\langle Dd,\nu \rangle |Dw|^2d\Sigma
    \end{align*}
  where $\text{Re}(w)$ and $\text{Im}(w)$ denote the real and imaginary part of $w$.   
\end{theorem}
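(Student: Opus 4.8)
The plan is to establish \eqref{carleman} by the standard conjugation-and-multiplier method for Carleman estimates, tracking every boundary contribution so that the spatial traces assemble into the stated $BT(w)$ and the temporal traces into the energy terms. First I would conjugate the principal operator by the weight: setting $v = e^{\tau\phi}w$ with $\phi$ as in \eqref{phi}, and using that the spatial gradient of $\phi$ equals $Dd$ while $\phi_t = -2ct$, a direct expansion gives
\[
\mathcal{L}_\tau v := e^{\tau\phi}\big(iw_t + \Delta_g w\big) = \Delta_g v - 2\tau\langle Dd, Dv\rangle + \tau^2|Dd|^2 v - \tau(\Delta_g d)\,v + i\big(v_t + 2c\tau t\, v\big),
\]
so that \eqref{genPDE} becomes $\mathcal{L}_\tau v = e^{\tau\phi}(F - G(w))$.

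Next I would split $\mathcal{L}_\tau = \mathcal{L}_+ + \mathcal{L}_-$ into its formally self-adjoint and skew-adjoint parts in $L^2(Q)$ (the terms $\tau^2|Dd|^2 v$ and $i\,v_t$ being the principal entries of each), chosen so that the coercive interior contributions reside in the cross term of
\[
\|\mathcal{L}_\tau v\|_{L^2(Q)}^2 = \|\mathcal{L}_+ v\|^2 + \|\mathcal{L}_- v\|^2 + 2\,\mathrm{Re}\,\langle \mathcal{L}_+ v, \mathcal{L}_- v\rangle \ge 2\,\mathrm{Re}\,\langle \mathcal{L}_+ v, \mathcal{L}_- v\rangle,
\]
after dropping the two nonnegative squares. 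Expanding $2\,\mathrm{Re}\,\langle\mathcal{L}_+ v,\mathcal{L}_- v\rangle$ and integrating by parts in $x$ and $t$ is where the geometric hypotheses enter decisively: the Hessian lower bound from (A1)(ii), $D^2 d(X,X)\ge\rho|X|_g^2$, produces a term $\ge 2\tau\rho\int_Q|Dv|^2$ on the gradient, while combining the same bound with the non-vanishing gradient condition (A2), $|Dd|\ge p$, yields the leading cubic term $\ge 4\tau^3\rho p^2\int_Q|v|^2$; the remaining curvature and first-order contributions are gathered into the $O(\tau^2)$ remainder. The free parameter $\delta_0\in(0,1)$ arises when apportioning the Hessian contribution between the $|Dv|^2$ and $|v|^2$ integrals, which is precisely why the gradient coefficient carries $\delta_0$ and the zeroth-order coefficient carries $(1-\delta_0)$. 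All spatial integrations by parts generate integrals over $\Sigma=[-T,T]\times\Gamma$ involving $\langle Dd,\nu\rangle$, $\langle Dw,\nu\rangle$, and the Schr\"odinger-specific $\mathrm{Re}/\mathrm{Im}$ cross traces, which I would collect into exactly the six integrals constituting $BT(w)$.

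It then remains to absorb the source, treat the energy-level term, and handle the time endpoints. Since $\mathcal{L}_\tau v = e^{\tau\phi}(F-G(w))$, the coercive-plus-boundary lower bound is dominated by $\|e^{\tau\phi}(F-G(w))\|^2$; bounding this with the multiplier identity used in \cite{TX2007} yields the term $4\int_Q e^{2\tau\phi}|F|^2$, while the energy-level part is controlled via \eqref{LoTFunBound} by $4C_T\int_Q e^{2\tau\phi}(|w|^2+|Dw|^2)$, and these are exactly the $-4C_T$ corrections that move onto the left and degrade the coercive coefficients. The $t$-integration by parts produces boundary terms at $t=\pm T$; since $\phi(\pm T,x)\le -\delta$ by \eqref{phi_delta}, each carries the factor $e^{2\tau\phi(\pm T,x)}\le e^{-2\tau\delta}$ and is dominated by $C_{d,T}\tau e^{-2\tau\delta}[\mathbb{E}_w(T)+\mathbb{E}_w(-T)]$ with $\mathbb{E}_w$ as in \eqref{energy}. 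Converting the interior integrals back to $w$ through $|v|^2=e^{2\tau\phi}|w|^2$ and $|Dv|^2\le 2e^{2\tau\phi}|Dw|^2 + 2\tau^2|Dd|^2 e^{2\tau\phi}|w|^2$ then assembles \eqref{carleman}.

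I expect the main obstacle to be the delicate sign bookkeeping in the cross-term integration by parts: one must verify that the pseudoconvexity of $d$ enters with the correct orientation so that both the quadratic gradient coefficient $\tau\rho$ and the cubic coefficient $\tau^3\rho p^2$ are genuinely positive for large $\tau$, and that every boundary integral lands in the prescribed form of $BT(w)$ with no uncontrolled leftover. The factor $i$ in the Schr\"odinger operator forces one to track the real and imaginary parts of $v$ separately throughout, which is the source of the $\mathrm{Re}(w)$–$\mathrm{Im}(w)$ cross terms appearing in $BT(w)$. As this estimate is \cite[Theorem~2.1]{TX2007}, I would follow that reference for the precise coefficient identities and the exact content of the remainder $O(\tau^2)$.
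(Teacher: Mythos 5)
Your proposal cannot be checked against an in-paper argument, because the paper supplies none: Theorem \ref{thmCE} is imported verbatim from \cite[Theorem~2.1]{TX2007}, and the text's entire ``proof'' is that citation. Your sketch is nonetheless a coherent route, and it is genuinely different in method from the cited source. Triggiani--Xu obtain the estimate from \emph{pointwise} Carleman differential identities, derived by applying Riemannian multipliers directly to the equation for $w$ itself, with no conjugation $v=e^{\tau\phi}w$; that is precisely what lets them state $BT(w)$ explicitly in the traces of $w$ and work at the $H^1$-level of the class \eqref{wclass}. Your Fursikov--Imanuvilov-style decomposition $\|\mathcal{L}_\tau v\|^2\ge 2\,\mathrm{Re}\langle\mathcal{L}_+v,\mathcal{L}_-v\rangle$ buys a cleaner interior coercivity computation, and your bookkeeping does reproduce the correct leading coefficients: the cross term $2\,\mathrm{Re}\bigl\langle\tau^2|Dd|^2v,\,-2\tau\langle Dd,Dv\rangle-\tau(\Delta_g d)v\bigr\rangle$ integrates by parts to $4\tau^3\int_Q D^2d(Dd,Dd)|v|^2$ plus a boundary trace, and $D^2d(Dd,Dd)\ge\rho|Dd|^2\ge\rho p^2$ gives exactly the theorem's $4\tau^3\rho p^2$, with the $\delta_0$/$(1-\delta_0)$ apportionment as you describe.

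The conjugation route does, however, cost you two things that the pointwise route is built to avoid, and you should flag both. First, expanding $\|\mathcal{L}_\tau v\|_{L^2(Q)}^2$ into squares and a cross term presupposes $\Delta_g v\in L^2(Q)$, whereas the class \eqref{wclass} only provides $w\in C([-T,T];H^1(\Omega))$ with $H^1$-level trace regularity; a regularization/density step is needed and your sketch omits it. Second, a small but real directional slip at the end: to convert the coercive lower bound from $v$ back to $w$ you need a \emph{lower} bound on $\int_Q|Dv|^2$, namely $e^{2\tau\phi}|Dw|^2\le 2|Dv|^2+2\tau^2|Dd|^2e^{2\tau\phi}|w|^2$; the inequality you wrote, $|Dv|^2\le 2e^{2\tau\phi}|Dw|^2+2\tau^2|Dd|^2e^{2\tau\phi}|w|^2$, points the wrong way. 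Once reversed, the resulting $-\,2\tau^2|Dd|^2|w|^2$ loss is absorbed into the $O(\tau^2)$ slack of the cubic coefficient, exactly as the statement anticipates. Modulo these two repairs and the exact constants (which you sensibly defer to \cite{TX2007}), your plan is sound.
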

\begin{rem}
If we have that $w|_\Sigma\equiv0$, 
then in view of the geometrical assumption (A1) we have
\begin{equation}\label{simpleBT}
    BT(w) = 2\tau\int_{\Sigma}e^{2\tau \phi}\langle Dd, \nu\rangle|Dw|^2d\Sigma \leq 2\tau\int_{\Sigma_1} e^{2\tau \phi}\langle Dd, \nu\rangle|Dw|^2d\Sigma.
\end{equation}
\end{rem}

\textbf{Energy estimate and regularity theory for general Schr\"{o}dinger equations with Dirichlet boundary condition}. 
Consider the Schr\"{o}dinger equation (\ref{genPDE}) with initial condition $w(0,x)=w_0(x)$ and Dirichlet boundary condition $w(t,x)=h(t,x)$ on $\Sigma$. Moreover, assume $G$ is as defined in (\ref{ManLoTF}) and satisfies (\ref{LoTFunBound}). Then the following regularity result for the solution $w$ holds true (this may be derived by interpolating between Theorems 10.9.10.1 and 10.9.7.1 in \cite{LTBook}): For $\gamma\geq -1$, suppose 
\begin{equation*}
        w_0\in H^{\gamma}(\Omega),\ \
        h\in H^{(\gamma+1)/2}\left(\Sigma\right),\ \
        F\in L^1\left([-T,T]; H^{\gamma}(\Omega)\right)
    \end{equation*}
with all compatibility conditions (trace coincidence) which makes sense. Then we have 
\begin{equation}\label{wReg}
        w\in C\left([-T,T]; H^{\gamma}(\Omega)\right).
    \end{equation}

In addition, for the equation (\ref{genPDE}), if we have the homogeneous boundary condition $h(t,x)=0$ and $w_0\in H^1_0(\Omega)$, $F\in H^1(-T,T; L^2(\Omega))$, $\vv{P}_1(t,\cdot)\in \left(C^1(\Omega)\right)^n$ for all $t\in[-T,T]$, and $P_0\in L^\infty(\Omega)$, then the following {\it a-priori} estimate holds true for the solution $w$: 
\begin{equation}\label{energyestimate}
\mathbb{E}_w(t)\leq C \left(\|w_0\|^2_{H^1_0(\Omega)}+\|F\|^2_{H^{1}(-T,T;L^2(\Omega))}\right),
    \end{equation}
where $C$ depends on $\Omega,\ T,\ \vv{P}_1,$ and $P_0$, and $\mathbb{E}_w(t)$ is defined as in (\ref{energy}) (which is equivalent to $\int_{Q} |D w(t,x)|^2dQ$ by Poincar\'{e} inequality). In the appendix we will provide a proof of the above estimate (\ref{energyestimate}).

\section{Main Proofs}\label{secProofs}
In this section we give the main proofs of the uniqueness and stability results established in Section \ref{secIntro}. We focus on proving Theorem \ref{thmISP} for the inverse source problem since Theorem \ref{thmIP} of the original inverse problem will then follow from the relation \eqref{Uconstruct} between the two problems and the regularity theory result recalled in Section \ref{secTools}. Henceforth for convenience we use $C$ to denote a generic positive constant which may depend on $\Omega$, $T$, $q_2$, $\vv{q}_1$, $q_0$, $r_0$, $w^{(i)}_0$, $R^{(i)}$, $i = 1,... ,n+2$, but not on the free large parameter $\tau$ appearing in the Carleman estimate.

\medskip
{\bf Proof of Theorem \ref{thmISP}}.
We return to the $u$-equation (\ref{uq2PDE}). Corresponding with the choice $R^{(i)}$, $i=1,\cdots,n+2$, we have $n+2$ equations of the form (\ref{uq2PDE}). 
Extend $u^{(i)}$ and $R^{(i)}$, $i=1,\cdots, n+2$, to $(-T,0)$ by 
\begin{equation}\label{extension}
u^{(i)}(x,t) = \overline{u^{(i)}(x,-t)}, \quad R^{(i)}(x,t) = \overline{R^{(i)}(x,-t)}, \ t\in[-T,0) 
\end{equation}
then we get the solution $u^{(i)} = u^{(i)}(x,t)$ satisfies
\begin{align}
    &\begin{cases}
        iu_t^{(i)}+q_2(x)\Delta u^{(i)}+\vv{q}_1(x)\cdot\nabla u^{(i)}+q_0(x)u^{(i)}=S^{(i)}(t,x)&\text{in }Q=[-T,T]\times\Omega\\[2mm]
        u^{(i)}(0,x)=0&\text{in  }\Omega\\[2mm]
        u^{(i)}(t,x)=0&\text{in  }\Sigma=[-T,T]\times\Gamma
    \end{cases}\label{uq2PDEnew}
\end{align}
where again we denote for $t\in[-T,T]$ and $x\in\Omega$,
\begin{align*}%
    &S^{(i)}(t,x) = f_2(x)\Delta R^{(i)}(t,x)+\vv{f}_1(x)\cdot\nabla R^{(i)}(t,x)+f_0(x)R^{(i)}(t,x).
\end{align*}

Differentiate the above system in time $t$, we get $u_t^{(i)}$ satisfies the equation
\begin{align}
    &\begin{cases}
        i(u_t^{(i)})_t+q_2(x)\Delta u_t^{(i)}+\vv{q}_1(x)\cdot\nabla u_t^{(i)}+q_0(x)u_t^{(i)}=S_t^{(i)}(t,x)&\text{in }Q\\[2mm]
        u_t^{(i)}(0,x)=-i S^{(i)}(0,x)&\text{in  }\Omega\\[2mm]
        u_t^{(i)}(t,x)=0&\text{in  }\Sigma.
    \end{cases}\label{utPDE}
\end{align}

Note in (\ref{uq2PDEnew}), plug in $t=0$ we may get for $i=1,\cdots, n+2$,
\begin{equation*}\label{t=0}
iu_t^{(i)}(0,x) = S^{(i)}(0,x) = f_2(x)\Delta R^{(i)}(0,x)+\vv{f}_1(x)\cdot\nabla R^{(i)}(0,x)+f_0(x)R^{(i)}(0,x).
\end{equation*} 
Put all of these $n+2$ equations together we have a linear system 
\begin{equation*}
i\left(u_t^{(1)}(0,x), \cdots, u_t^{(n+2)}(0,x)\right)^T = U_{\bf R}(x)\left(f_0(x), \vv{f}_1(x), f_2(x)\right)^T
\end{equation*}
where $U_{\bf R}(x)$ is the $(n+2)\times(n+2)$ coefficient matrix as defined in (\ref{Ur}). By the positivity assumption (\ref{Upos}) we may invert $U_{\bf R}(x)$ to obtain 
\begin{equation}\label{32}
|f_2(x)|^2+|\vv{f}_1(x)|^2+|f_0(x)|^2 \leq C \sum_{i=1}^{n+2}|u_t^{(i)}(0,x)|^2 = C\|{\bf u}_t(0,x)\|^2
\end{equation}
where we denote ${\bf u}(t,x) = \left(u^{(1)}(t,x), \cdots, u^{(n+2)}(x,t)\right)\in\mathbb{C}^{n+2}$ and $\|\cdot\|$ the standard Euclidean vector norm in $\mathbb{C}^{n+2}$. In addition, note we have the following exponentially weighted identity for the term on the right-hand side of (\ref{32}).
\begin{lemma}
For ${\bf u}(t,x) = \left(u^{(1)}(t,x), \cdots, u^{(n+2)}(x,t)\right)$ where $u^{(i)}(t,x)$ satisfies (\ref{uq2PDEnew}), the following identity holds true:
\begin{eqnarray}\label{33}
& \ & \int_{\Omega} e^{2\tau\phi(0,x)}\|{\bf u}_t(0,x)\|^2\,dx \nonumber\\[2mm]
 & = & -4c\tau\int_{\Omega}\int_{-T}^0te^{2\tau\phi}\|{\bf u}_t\|^2\,dtdx + \int_{\Omega}e^{2\tau\phi(-T,x)}\|{\bf u}_t(-T,x)\|^2\,dx \nonumber \\[2mm]
 & \ & +2 \, Im\int_{\Omega}\int_{-T}^0  \left(J_{{\bf u}_t}\left[\nabla(e^{2\tau\phi}q_2) - e^{2\tau\phi}\vv{q}_1\right]\right)\cdot \overline{{\bf u}_t}\,dtdx \nonumber \\[2mm]
 & \ & + 2 \, Im\int_{\Omega}\int_{-T}^0 e^{2\tau\phi} \overline{{\bf u}_t}\cdot {\bf S}_t \,dtdx
\end{eqnarray}
where $\phi$ is the psudeo-convex function as defined in (\ref{phi}), $J_{{\bf u}_t}$ is the Jacobian matrix of ${\bf u}_t$, ${\bf S} = \left(S^{(1)}(t,x), \cdots, S^{(n+2)}(x,t)\right)\in\mathbb{C}^{n+2}$, and $\cdot$ denotes the standard dot product for complex vectors. 
\end{lemma}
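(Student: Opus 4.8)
The plan is to obtain the identity by differentiating the weighted energy density $e^{2\tau\phi}\|\mathbf{u}_t\|^2$ in time and integrating over $[-T,0]$. By the fundamental theorem of calculus,
\[
\int_\Omega e^{2\tau\phi(0,x)}\|\mathbf{u}_t(0,x)\|^2\,dx - \int_\Omega e^{2\tau\phi(-T,x)}\|\mathbf{u}_t(-T,x)\|^2\,dx = \int_{-T}^0\frac{d}{dt}\left(\int_\Omega e^{2\tau\phi}\|\mathbf{u}_t\|^2\,dx\right)dt,
\]
so that moving the $t=-T$ boundary term to the right produces the second term of the claimed identity. Here I use that the regularity hypothesis \eqref{RregCond} on the $R^{(i)}$, together with the regularity theory recalled in Section \ref{secTools}, makes $u_t^{(i)}$ and $u_{tt}^{(i)}$ well defined and smooth enough to justify differentiation under the integral and the spatial integration by parts below.

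Next I would expand the integrand. Since $\phi(t,x)=d(x)-ct^2$ gives $\partial_t e^{2\tau\phi}=-4c\tau t\,e^{2\tau\phi}$, the contribution of the time derivative hitting the exponential is exactly the first term $-4c\tau\int_\Omega\int_{-T}^0 t\,e^{2\tau\phi}\|\mathbf{u}_t\|^2\,dt\,dx$. For the remaining piece I write $\frac{d}{dt}\|\mathbf{u}_t\|^2 = 2\,\text{Re}(\mathbf{u}_{tt}\cdot\overline{\mathbf{u}_t})$ and substitute the $u_t$-equation \eqref{utPDE}, namely $u_{tt}^{(i)} = i\bigl(q_2\Delta u_t^{(i)} + \vv{q}_1\cdot\nabla u_t^{(i)} + q_0 u_t^{(i)} - S_t^{(i)}\bigr)$. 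Using $\text{Re}(iz)=-\,\text{Im}(z)$ then turns this real part into $-2\,\text{Im}$ of the corresponding sum over $i$, weighted by $e^{2\tau\phi}$.

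Three simplifications produce the stated right-hand side. First, the zeroth-order term drops out: since $q_0$ is real and $u_t^{(i)}\overline{u_t^{(i)}}=|u_t^{(i)}|^2$ is real, its imaginary part vanishes. Second, for the principal term I integrate $e^{2\tau\phi}q_2\,\overline{u_t^{(i)}}\Delta u_t^{(i)}$ by parts in $x$ via Green's identity; the boundary integral vanishes because $u_t^{(i)}|_\Gamma=0$ (the homogeneous Dirichlet condition in \eqref{utPDE}), and the resulting term $e^{2\tau\phi}q_2|\nabla u_t^{(i)}|^2$ is real and hence disappears under $\text{Im}$, leaving only $-\,\overline{u_t^{(i)}}\,\nabla(e^{2\tau\phi}q_2)\cdot\nabla u_t^{(i)}$. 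Combining this with the first-order term $-e^{2\tau\phi}\vv{q}_1\cdot\nabla u_t^{(i)}\,\overline{u_t^{(i)}}$ and summing over $i$ assembles precisely the Jacobian expression $\bigl(J_{\mathbf{u}_t}[\nabla(e^{2\tau\phi}q_2)-e^{2\tau\phi}\vv{q}_1]\bigr)\cdot\overline{\mathbf{u}_t}$, since the $i$-th component of $J_{\mathbf{u}_t}\mathbf{v}$ is $\nabla u_t^{(i)}\cdot\mathbf{v}$. Third, the source contribution $+2\,\text{Im}\sum_i e^{2\tau\phi}S_t^{(i)}\overline{u_t^{(i)}}$ is exactly the last term $2\,\text{Im}\int e^{2\tau\phi}\overline{\mathbf{u}_t}\cdot\mathbf{S}_t$.

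The computation is essentially bookkeeping; the point to watch is the consistent handling of complex conjugates and the real/imaginary-part identities, together with confirming that the only surviving boundary contribution is the temporal one at $t=-T$, all spatial boundary terms being annihilated by the homogeneous Dirichlet condition on $u_t^{(i)}$. The only genuinely non-cosmetic step is ensuring the regularity needed to differentiate twice in $t$ and apply Green's identity in $x$, which is furnished by \eqref{RregCond} and the regularity theory of Section \ref{secTools}.
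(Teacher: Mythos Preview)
Your proposal is correct and follows essentially the same route as the paper: write the left side via the fundamental theorem of calculus on $\frac{d}{dt}(e^{2\tau\phi}|u_t^{(i)}|^2)$, substitute the $u_t^{(i)}$-equation for $u_{tt}^{(i)}$, discard the $q_0$ term and the $q_2|\nabla u_t^{(i)}|^2$ term after Green's identity because they are real, and sum over $i$ to obtain the Jacobian form. The paper carries this out componentwise in (3.5)--(3.7) and then sums, whereas you phrase it in vector notation throughout, but the argument is the same.
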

\begin{proof}
Note for each $i=1, \cdots, n+2$, we have
\begin{eqnarray}\label{34}
& \ & \int_{\Omega}e^{2\tau\phi(0,x)}|u_t^{(i)}(0,x)|^2\,dx \nonumber \\[2mm] & = & \int_{\Omega}\int_{-T}^0\frac{d}{dt}(e^{2\tau\phi}|u_t^{(i)}|^2)\,dtdx + \int_{\Omega}e^{2\tau\phi(-T,x)}|u_t^{(i)}(-T,x)|^2\,dx \nonumber \\[2mm]
& = & -4c\tau\int_{\Omega}\int_{-T}^0 te^{2\tau\phi}|u_t^{(i)}|^2\,dtdx+2Re\int_{\Omega}\int_{-T}^0e^{2\tau\phi}\overline{u_t^{(i)}}u_{tt}^{(i)}\,dtdx \nonumber \\
& \ & \ +\int_{\Omega}e^{2\tau\phi(-T,x)}|u_t^{(i)}(-T,x)|^2\,dx
\end{eqnarray}
Evaluate the second integral term on the right-hand side of (\ref{34}), note by using the $u_t^{(i)}$-equation from (\ref{utPDE}), we have 
\begin{eqnarray}\label{35}
& \ & 2Re\int_{\Omega}\int_{-T}^0e^{2\tau\phi}\overline{u_t^{(i)}}u_{tt}^{(i)}\,dtdx \nonumber\\[2mm]
& = & -2Im \int_{\Omega}\int_{-T}^0e^{2\tau\phi}\overline{u_t^{(i)}}\left[q_2(x)\Delta u_t^{(i)}+\vv{q}_1(x)\cdot\nabla u_t^{(i)}+q_0(x)u_t^{(i)}-S_t^{(i)}\right]\,dtdx \nonumber\\[2mm]
& = & -2Im \int_{\Omega}\int_{-T}^0e^{2\tau\phi}\overline{u_t^{(i)}}\left[q_2(x)\Delta u_t^{(i)}+\vv{q}_1(x)\cdot\nabla u_t^{(i)}-S_t^{(i)}\right]\,dtdx 
\end{eqnarray}
where in the last step we use the fact $q_0(x)|u_{t}^{(i)}|^2\in\mathbb{R}$. Next, by the Green's formula, the vanishing boundary condition $u_t^{(i)}=0$ on $\Sigma$, and the fact $q_2(x)|\nabla u_{t}^{(i)}|^2\in\mathbb{R}$, we may evaluate the first integral term on the right-hand side of (\ref{35}) as
\begin{eqnarray}\label{36}
-2Im\int_{\Omega}\int_{-T}^0e^{2\tau\phi}\overline{u_t^{(i)}}q_2(x)\Delta u_t^{(i)}\,dtdx & = & 2Im\int_{\Omega}\int_{-T}^0 \nabla (e^{2\tau\phi}q_2(x)\overline{u_t^{(i)}})\cdot\nabla u_t^{(i)}\,dtdx \nonumber\\[2mm]
& = & 2Im\int_{\Omega}\int_{-T}^0 \nabla (e^{2\tau\phi}q_2(x))\cdot\nabla u_t^{(i)} \overline{u_t^{(i)}}\,dtdx.
\end{eqnarray}
Combine together (\ref{34}), (\ref{35}) and (\ref{36}), we arrive at 
\begin{eqnarray*}\label{37}
& \ & \int_{\Omega}e^{2\tau\phi(0,x)}|u_t^{(i)}(0,x)|^2\,dx \nonumber \\[2mm] 
& = & -4c\tau\int_{\Omega}\int_{-T}^0 te^{2\tau\phi}|u_t^{(i)}|^2\,dtdx+\int_{\Omega}e^{2\tau\phi(-T,x)}|u_t^{(i)}(-T,x)|^2\,dx \nonumber\\[2mm]
& \ & \ + 2Im\int_{\Omega}\int_{-T}^0 \left[\nabla (e^{2\tau\phi}q_2) - e^{2\tau\phi}\vv{q}_1\right]\cdot \nabla u_t^{(i)}\overline{u_t^{(i)}}\,dtdx + 2Im\int_{\Omega}\int_{-T}^0e^{2\tau\phi}\overline{u_t^{(i)}}S_t^{(i)}\,dtdx.
\end{eqnarray*}
Sum the above identity over $i=1,\cdots, n+2$ we readily get the desired identity (\ref{33}).
\end{proof}

To continue, note by the regularity assumptions on $q_2$ and $\vv{q}_1$ in (\ref{qReg}), the above identity (\ref{33}) gives the following estimate
\begin{eqnarray}\label{38}
& \ & \int_{\Omega} e^{2\tau\phi(0,x)}\|{\bf u}_t(0,x)\|^2\,dx \nonumber \\[2mm]
& \leq & C\tau\int_{\Omega}\int_{-T}^0-te^{2\tau\phi}\left(\|{\bf u}_t\|^2+\|\nabla{\bf u}_t\|^2\right)\,dtdx \nonumber \\
& \ & +C \int_{\Omega}\int_{-T}^0-te^{2\tau\phi}\|{\bf S}_t\|^2\,dtdx+C\int_{\Omega}e^{2\tau\phi(-T,x)}\|{\bf u}_t(-T,x)\|^2\,dx \nonumber \\[2mm]
& \leq & C\tau\int_{Q}e^{2\tau\phi}\left(\|{\bf u}_t\|^2+\|\nabla{\bf u}_t\|^2\right)\,dQ \nonumber \\
& \ & +C \int_{Q}e^{2\tau\phi}\|{\bf S}_t\|^2\,dQ+C\int_{\Omega}e^{2\tau\phi(-T,x)}\|{\bf u}_t(-T,x)\|^2\,dx 
\end{eqnarray}
where $\|\nabla {\bf u}_t\|^2 = \disp\sum_{i=1}^{n+2}|\nabla u_t^{(i)}|^2$. 

We now estimate the terms on the right hand side of (\ref{38}). First, note since $q_2\in\mathcal{C}$, the above $u_t^{(i)}$ equations (\ref{utPDE}) can be written as Schr\"{o}dinger equations on the Riemannian manifold with the metric $g=q_2^{-1}(x)dx^2$, module lower-order terms as follows:
\begin{equation*}\label{riemannut}
i(u_t^{(i)})_t+\Delta_g u_t^{(i)}+q_2^{-1}\left(\vv{q}_1(x)-q_2^{\frac{n}{2}}\nabla(q_2^{\frac{2-n}{2}})\right)\cdot\nabla_g u_t^{(i)}+q_0(x)u_t^{(i)}=S_t^{(i)}(t,x).
\end{equation*}  
By the regularity assumption (\ref{RregCond}) on $R^{(i)}$, we have $S_t^{(i)}\in L^2(Q)$, $S(0,x)\in H^1_0(\Omega)$ and thus we may apply the Carleman estimate (\ref{carleman}) for the solutions $u_t^{(i)}$,  $i=1,\cdots, n+2$, and get the following inequality for sufficiently large $\tau$: 
\begin{eqnarray*}\label{40}
& \ & \tau\int_{Q}e^{2\tau\phi}|Du_t^{(i)}|^2\,dQ + \tau^3\int_Qe^{2\tau\phi}|u_t^{(i)}|^2\,dQ \nonumber \\[2mm]
& \leq & BT(u_t^{(i)})+C \int_Qe^{2\tau\phi}|S_t^{(i)}|^2\,dQ+C\tau e^{-2\tau\delta}\left[\mathbb{E}_{u_t^{(i)}}(T)+\mathbb{E}_{u_t^{(i)}}(-T)\right].
\end{eqnarray*}
Note $Du_t^{(i)} = q_2(x)\nabla u_t^{(i)}$ and $q_2(x)\geq (q^*)^{-1}$ from (\ref{qReg}), we may add the above $n+2$ inequalities together to get
\begin{eqnarray}\label{41}
& \ & \tau\int_{Q}e^{2\tau\phi}\|\nabla {\bf u}_t\|^2\,dQ + \tau^3\int_Q\|{\bf u}_t\|^2\,dQ \nonumber \\[2mm]
& \leq & BT({\bf u}_t)+C \int_Qe^{2\tau\phi}\|{\bf S}_t\|^2\,dQ+C\tau e^{-2\tau\delta}\left[\mathbb{E}_{{\bf u}_t}(T)+\mathbb{E}_{{\bf u}_t}(-T)\right].
\end{eqnarray}
Plug this into (\ref{38}), combine with (\ref{32}) and use the fact that $\phi(-T,x)<-\delta$ from (\ref{phi_delta}), we get
\begin{eqnarray}\label{42}
& \ & \int_{\Omega} e^{2\tau\phi(0,x)} \left[|f_2(x)|^2+|\vv{f}_1(x)|^2+|f_0(x)|^2\right]\,dx \nonumber \\[2mm] 
& \leq &BT({\bf u}_t)+C \int_Qe^{2\tau\phi}\|{\bf S}_t\|^2\,dQ +C\tau e^{-2\tau\delta}\left[\mathbb{E}_{{\bf u}_t}(T)+\mathbb{E}_{{\bf u}_t}(-T)\right].
\end{eqnarray}

For the middle term on the right-hand side of (\ref{42}), we claim we have 
\begin{eqnarray}\label{44}
\int_Q e^{2\tau\phi}\|{\bf S}_t\|^2\,dQ = o(1)\int_{\Omega} e^{2\tau\phi(0,x)}\left[|f_2(x)|^2+|\vv{f}_1(x)|^2+|f_0(x)|^2\right]\,dx
\end{eqnarray}
where the term $o(1)$ satisfies $\displaystyle\lim_{\tau\to\infty} o(1)=0$. To prove this claim, note for $i=1,\cdots, n+2$, we have
\begin{equation*}
    S^{(i)}_t(t,x) = f_2(x)\Delta R^{(i)}_t(t,x)+\vv{f}_1(x)\cdot\nabla R^{(i)}_t(t,x)+f_0(x)R^{(i)}_t(t,x)
\end{equation*}
and we may estimate the term $\int_{Q}e^{2\tau\phi}|f_2|^2||\Delta R^{(i)}_t|^2\,dtdx$ as follows 
\begin{eqnarray}\label{45}
\int_{Q}e^{2\tau\phi}|f_2|^2||\Delta R^{(i)}_t|^2\,dtdx & = & \int_Q e^{2\tau\phi(0,x)}|f_2|^2e^{2\tau[\phi(t,x)-\phi(0,x)]}|\Delta R^{(i)}_t|^2\,dtdx \nonumber \\[2mm]
& \leq & \int_{\Omega}e^{2\tau\phi(0,x)}|f_2|^2\left(\int_{-T}^Te^{-2c\tau t^2}\|\Delta R_t^{(i)}\|_{L^\infty(\Omega)}^2\,dt\right)dx.
\end{eqnarray}
Note $e^{-2c\tau t^2}\to 0$ as $\tau\to\infty$ except at $t=0$, and from assumption (\ref{RregCond}) we have  $\Delta R_t^{(i)}\in L^1(Q)$. Hence by Dominated Convergence Theorem we have
\begin{equation*}\label{46}
\lim_{\tau\to\infty}\int_{-T}^T e^{-2c\tau t^2}\|\Delta R_t^{(i)}\|_{L^\infty(\Omega)}^2\,dt = 0.
\end{equation*}
Plug this into (\ref{45}), we thus get 
\begin{equation*}
\int_Q e^{2\tau\phi}|f_2|^2||\Delta R^{(i)}_t|^2\,dQ = o(1)\int_{\Omega} e^{2\tau\phi(0,x)}|f_2(x)|^2]\,dx.
\end{equation*}
Apply similar estimates to $\int_{Q}e^{2\tau\phi}|\vv{f}_1|^2||\nabla R^{(i)}_t|^2\,dtdx$ and $\int_{Q}e^{2\tau\phi}|f_0|^2||R^{(i)}_t|^2\,dtdx$, then sum over $i=1,\cdots, n+2$, we readily get the desired estimate (\ref{44}).

Next, first the boundary term on the right-hand side of (\ref{42}), notice in view of (\ref{simpleBT}) and the homogeneous boundary conditions $u^{(i)}\equiv0$ on $\Sigma$ and $Du^{(i)} = q_2(x)\nabla u^{(i)}$, $i=1,\cdots, n+2$, we have the following estimate (recall $d_1= \disp\max_{x\in\overline{\Omega}} d(x)$)
\begin{equation}\label{45'}
BT({\bf u}_t) \leq C\tau e^{2\tau d_1}\left\|D{\bf u}_t\right\|^2_{L^2(\Sigma_1)} \leq C\tau e^{2\tau d_1}\left\|\frac{\partial {\bf u}_t}{\partial \nu}\right\|^2_{L^2(\Sigma_1)}. 
\end{equation}

Finally, note that $\phi(x,0)=d(x)$ and $\displaystyle d_0 := \min_{x\in\overline{\Omega}} d(x) > 0$. Thus by plugging (\ref{44}) and (\ref{45'}) in (\ref{42}), we get
\begin{eqnarray*}
& \ & \int_{\Omega} \left[|f_2(x)|^2+|\vv{f}_1(x)|^2+|f_0(x)|^2\right]\,dx \\[2mm]
& \leq & C\tau e^{2\tau(d_1-d_0)}\left\|\frac{\partial {\bf u}_t}{\partial \nu}\right\|^2_{L^2(\Sigma_1)}+C\tau e^{-2\tau(\delta+d_0)}\left[\mathbb{E}_{{\bf u}_t}(T)+\mathbb{E}_{{\bf u}_t}(-T)\right].
\end{eqnarray*}

Note by applying the energy estimate (\ref{energyestimate}) to the $u_t^{(i)}$ equation (\ref{utPDE}), sum over $i=1,\cdots, n+2$, and applying the regularity assumption (\ref{RregCond}) as well as the Poincar\'{e} inequality, we have
\begin{eqnarray*}
\mathbb{E}_{{\bf u}_t}(T)+\mathbb{E}_{{\bf u}_t}(-T) & \leq & C \left(\|{\bf S}_t\|^2_{H^1(-T,T; L^2(\Omega))}+\|{\bf S}(0,x)\|_{H^1_0(\Omega)}^2\right) \nonumber \\[2mm]
& \leq & C\left[\|f_2\|_{H_0^1(\Omega)}^2+\|\vv{f}_1\|_{{\bf H}_0^1(\Omega)}^2+\|f_0\|_{H_0^1(\Omega)}^2\right]
\end{eqnarray*} 
where for ${\bf r}(x) = (r_1(x), \cdots, r_n(x))$, $\disp\|{\bf r}\|_{{\bf H}_0^1(\Omega)}^2 = \int_\Omega\sum^n_{i=1}|\nabla r_i(x)|^2dx$.
As $f_2, f_0\in\mathcal{A}_M$ and $\vv{f}_1\in (\mathcal{A}_M)^n$, we hence get 
\begin{equation*}
\int_{\Omega} \left[|f_2(x)|^2+|\vv{f}_1(x)|^2+|f_0(x)|^2\right]\,dx \leq C\tau e^{2\tau(d_1-d_0)}\left\|\frac{\partial {\bf u}_t}{\partial \nu}\right\|^2_{L^2(\Sigma_1)}+CM^2\tau e^{-2\tau(\delta+d_0)}.
\end{equation*}
As $\disp\tau e^{-2\tau(\delta+d_0)}\to 0$ and $\tau e^{2\tau(d_1-d_0)}\to\infty$, $\tau\to\infty$, we get for $\tau>0$ large enough, 
\begin{equation*}
\int_{\Omega} \left[|f_2(x)|^2+|\vv{f}_1(x)|^2+|f_0(x)|^2\right]\,dx \leq C\tau e^{2\tau(d_1-d_0)}\left\|\frac{\partial {\bf u}_t}{\partial \nu}\right\|^2_{L^2(\Sigma_1)}.
\end{equation*}
Namely, for sufficiently large $\tau$,
\begin{equation*}
\|f_2\|_{L^2(\Omega)}^2+\|\vv{f}_1\|_{{\bf L}^2(\Omega)}^2+\|f_0\|_{L^2(\Omega)}^2 \leq C\tau e^{C\tau}\left\|\frac{\partial {\bf u}_t}{\partial \nu}\right\|^2_{L^2(\Sigma_1)} = C e^{C\tau}\sum^{n+2}_{i=1}\left\|\frac{\partial u_t^{(i)}}{\partial \nu}\right\|^2_{L^2(\Sigma_1)}.
\end{equation*}
Since $u_t^{(i)} = -\overline{u^{(i)}(x,-t)}, t\in (-T,0)$ from the extension (\ref{extension}), we get $\left\|\frac{\partial u_t^{(i)}}{\partial \nu}\right\|^2_{L^2(\Sigma_1)}=2\left\|\frac{\partial u_t^{(i)}}{\partial \nu}\right\|^2_{L^2(\Sigma_1^T)}$, and hence the desired stability estimate: There exists a constant $C=C(\Omega, T, \Gamma_1, q_2, \vv{q}_1, q_0, \{R^{(i)}\}_{i=1}^{n+2})>0$, 
such that  
\begin{equation*} 
  \|f_2\|_{L^2(\Omega)}^2+\|\vv{f}_1\|_{\textbf{L}^2(\Omega)}^2+\|f_0\|_{L^2(\Omega)}^2
        \leq  C\sum^{n+2}_{i=1}\left\|\frac{\partial u_t^{i)}}{\partial \nu}\right\|^2_{L^2(\Sigma_1^T)}.
\end{equation*}

{\bf Proof of Theorem \ref{thmIP}}.
Now we provide the proof of stability for the original inverse problem. These results are a direct consequence of Theorem \ref{thmISP} with the given relationship \eqref{Uconstruct} between the original inverse problem and the inverse source problem. More precisely, we have the positivity condition \eqref{Wpos} imply \eqref{Upos}. In addition, by the regularity result (\ref{wReg}), the assumption \eqref{w0RegCond} on the initial boundary conditions $\{w_0^{(i)},h\}$ implies the solutions $w^{(i)}$, $i=1,\cdots,n+2$, satisfy
\begin{equation*}
w^{(i)} \in C([0,T]; H^{\gamma}(\Omega)).
\end{equation*}
As $\gamma>\frac{n}{2}+4$, we have the following embedding $H^{\gamma}(\Omega)\mapsto W^{4,\infty}(\Omega)$ and hence the regularity assumption \eqref{w0RegCond} implies the corresponding regularity assumption \eqref{RregCond} for the inverse source problem. This completes the proof of all the theorems.

\section{Some Examples and Concluding Remarks}\label{secExamples}

In this last section we first provide some concrete examples such that the key positivity condition \eqref{Wpos} is satisfied, and then we give some general remarks.
 
\medskip
\noindent{\bf Example 1}. Consider the following functions $w_0^{(i)}(x)$, ${x}=(x_1, \cdots, x_n)\in\Omega$, $i=1, \cdots, n+2$, defined by
\begin{equation*}
\begin{cases}
w_0^{(1)}(x) =  1, \ w_0^{(2)}(x)  = x_1, \ w_0^{(n+2)}(x)  =  \frac{1}{2}x_1^2 \\[2mm] 
w_0^{(i)}(x) = x_{i-1}, \ i=3,\cdots, n+1.
\end{cases}
\end{equation*} 
Then we may easily see that the matrix $W_{\bf w_0}(x)$ is a lower triangular matrix with all $1$'s along the diagonal. Thus the determinant of the $W_{\bf w_0}(x)$ matrix is $1$ for all $x\in\Omega$ and therefore the positivity condition \eqref{Wpos} is satisfied. 

\medskip
\noindent{\bf Example 2}. Consider the following functions $w_0^{(i)}(x)$, $x=(x_1, \cdots, x_n)\in\Omega$, $i=1, \cdots, n+2$, defined by
\begin{equation*}
w_0^{(1)}(x) = 1, \ w_0^{(i)}(x) = e^{x_{i-1}}, \ i=2,\cdots, n+1, \ w_0^{(n+2)}(x) = e^{-x_{1}}.
\end{equation*}
Then the matrix $W_{\bf w_0}(x)$ becomes
\begin{align*}
    W_{\bf w_0}(x)=\begin{bmatrix}
        1 & 0 & 0 & 0 & \cdots & 0 & 0\\[2mm]
        e^{x_1} & e^{x_1} & 0 & 0 & \cdots & 0 & e^{x_1} \\[2mm]
        e^{x_2} & 0 & e^{x_2} & 0 & \cdots & 0 & e^{x_2} \\[2mm]
        \vdots & \vdots & \vdots & & \ddots & \vdots & \vdots \\
         e^{x_{n}} & 0 & 0 & \cdots & 0 & e^{x_{n}} & e^{x_{n}}\\[2mm]
        e^{-x_{1}} & -e^{-x_{1}} & 0 & \cdots & 0 & 0 & e^{-x_1} \\[2mm]
    \end{bmatrix} 
\end{align*}
Here $W_{\bf w_0}(x)$ is not a lower triangular matrix. However, subtract the 2nd, 3rd, $\cdots$, (n+1)th columns from the last column does transform it into a lower triangular matrix with nonzero diagonal elements. It is easy to see that the determinant is given by $2\prod_{i=2}^ne^{x_i}$. As $\Omega$ is a bounded domain, we hence have the condition \eqref{Wpos} is satisfied.

\medskip
\noindent{\bf Example 3}. As a general formula which contains the above two examples as special cases, we may consider for $x=(x_1, \cdots, x_n)\in\Omega$,
\begin{equation*}
w_0^{(1)}(x) = 1, \ w_0^{(i)}(x) = w_0^{(i)}(x_1,\cdots, x_{i-1}), \ i=2,\cdots, n+1, \ w_0^{(n+2)}(x) = w_0^{(n+2)}(x_1).
\end{equation*}
where the functions $w_0^{(i)}(x)$, $i=2, \cdots, n+2$, satisfy that there exist positive constants $r_1, \cdots, r_{n+1}$ such that
\begin{equation}\label{positivityexample}
\begin{cases}
|\partial_{x_{i-1}}w_0^{(i)}|\geq r_{i-1}>0, \ i=2,\cdots, n+1; \\[2mm]
|\partial_{x_1}w_0^{(1)}\partial_{x_1x_1}w_0^{(n+2)}-\partial_{x_1}w_0^{(n+2)}\partial_{x_1x_1}w_0^{(1)}|\geq r_{n+1}>0.
\end{cases}
\end{equation}
In this case, the matrix $W_{\bf w_0}(x)$ is given by
\begin{align*}
    W_{\bf w_0}(x)=\begin{bmatrix}
        1 & 0 & 0 & 0 & \cdots & 0 & 0\\[2mm]
         w_0^{(2)} & \partial_{x_1}w_0^{(2)} & 0 & 0 & \cdots & 0 & \partial_{x_1x_1}w_0^{(2)} \\[2mm]
        w_0^{(3)} & \partial_{x_1}w_0^{(3)} & \partial_{x_2}w_0^{(3)} & 0 & \cdots & 0 & \Delta w_0^{(3)} \\[2mm]
        \vdots & \vdots & \vdots & & \ddots & \vdots & \vdots \\
        w_0^{(n+1)} & \partial_{x_1}w_0^{(n+1)} & \partial_{x_2}w_0^{(n+1)} & \partial_{x_3}w_0^{(n+1)} & \cdots & \partial_{x_n}w_0^{(n+1)} & \Delta w_0^{(n+1)} \\[2mm]
        w_0^{(n+2)} & \partial_{x_1}w_0^{(n+2)} & 0 & 0 & \cdots & 0 & \partial_{x_1x_1}w_0^{(n+2)} \\[2mm]
    \end{bmatrix} 
\end{align*}
Note if we multiply the second row by $\frac{\partial_{x_1}w_0^{(n+2)}}{\partial_{x_1}w_0^{(2)}}$ and subtract it from the last row, we will make the second entry in the last row zero. Then by simple row operations between the 2nd, 3rd, $\cdots$, (n+1)th row and the last row may make the last column all zero except the last entry, which equals to $$\disp\partial_{x_1x_1}w_0^{(n+2)} - \frac{\partial_{x_1x_1}w_0^{(2)}\partial_{x_1}w_0^{(n+2)}}{\partial_{x_1}w_0^{(2)}}.$$
Therefore we arrive at a lower triangular matrix and we may easily compute the determinant as 
\begin{eqnarray*}
& \ & \prod_{i=2}^{n+1}\partial_{x_{i-1}}w_0^{(i)}\left[\partial_{x_1x_1}w_0^{(n+2)} - \frac{\partial_{x_1x_1}w_0^{(2)}\partial_{x_1}w_0^{(n+2)}}{\partial_{x_1}w_0^{(2)}}\right] \\[2mm]
& = & \prod_{i=3}^{n+1}\partial_{x_{i-1}}w_0^{(i)}\left[\partial_{x_1}w_0^{(2)}\partial_{x_1x_1}w_0^{(n+2)} - \partial_{x_1x_1}w_0^{(2)}\partial_{x_1}w_0^{(n+2)}\right]. 
\end{eqnarray*}
The assumption (\ref{positivityexample}) then guarantees the positivity condition \eqref{Wpos} is satisfied.

\medskip
Finally, let us mention some general comments and remarks.

\medskip
(1) It is also possible to set up our inverse problem by assuming Neumann boundary condition $\frac{\partial w}{\partial \nu}$ on $\Sigma^T$ and making measurements of Dirichlet boundary traces of the solution $w$ over $\Sigma_1^T$. This, however, would require more demanding geometrical assumptions on the unobserved portion of the boundary $\Gamma_0$. For example, we may need to assume $\langle Dd,\nu\rangle=0$ on $\Gamma_0$ in the geometrical assumption to account for the Neumann boundary condition \cite{TX2007}. In addition, the more delicate regularity theory for Schr\"{o}dinger eqaution with nonhomogeneous Neumann boundary condition will also need to be invoked. Nevertheless, the main ideas of solving the inverse problem remain the same.

\medskip
(2) An important first step in our proof is to extend the solution of the Schr\"{o}dinger equation (\ref{uq2PDE}) to $[-T,0]$, as denoted in (\ref{extension}). Consequently, we need to assume all the coefficients in the Schr\"{o}dinger equation are real-valued. As such an extension is needed in most of the solution methods based on Carleman estimates, there were very few work on recovering complex coefficients with single or finitely many boundary measurements for the Schr\"{o}dinger equation. The recent work \cite{HKSY2019} showed that it is possible to recover a complex electrical potential from a single Neumann boundary measurement under certain uniformly boundedness assumption on the difference between two electrical potentials. On the other hand, let us also remark that one does not need the coefficients to be real-valued in order to derive Carleman estimates, although certain boundedness conditions like (\ref{LoTFunBound}) on the coefficients will be needed. 

\medskip
(3) The inverse problem formulated in this paper is often referred as the so-called ``single measurement'' type inverse problem. Let us mention that there is another standard ``many measurements'' formulation for inverse problem for the Schr\"{o}dinger equation. In such formulation one seeks to recover coefficients of the Schr\"{o}diner equation from all possible boundary measurements, which are often modeled by the Dirichlet to Neumann map. Although more measurements are required, an advantage in this set up is that one does not need certain positivity assumption on the initial condition in order to solve the inverse problem. In particular, complex-valued coefficients may also be considered. The standard solution methods in the many measurements formulation often involve using the geometric optics solutions \cite{B2017, BDSF2010} or the Boundary Control (BC) method developed by Belishev \cite{Be1987}. For the latter we also refer to the monograph \cite{KKL2000}.

\section*{Appendix: Energy Estimate for the Schr\"{o}dinger equation}

In this appendix we provide a proof for the energy estimate of the general Schr\"{o}dinger equation defined on a Riemannian manifold as in Section 2. This should be a standard estimate, but we did not find any good reference so we decided to include a short proof here. 

\begin{theorem}[Energy Estimate]
For the Schr\"{o}dinger equation \eqref{genPDE} with the energy level differential term $G$ defined as in (\ref{ManLoTF}) and (\ref{LoTFunBound}), and  forcing term $F\in H^1(-T,T; L^2(\Omega))$. Suppose we have the homogeneous Dirichlet boundary condition $w=0$ on $\Sigma=[-T,T]\times\Gamma$ for some fixed positive $T>0$, and the initial condition $w_0\in H^1_0(\Omega)$. Then we have
\begin{equation}\label{EE} 
\mathbb{E}_w(t) \leq C \left(\|w_0\|^2_{H^1_0(\Omega)}+\|F\|^2_{H^{1}(-T,T;L^2(\Omega))}\right), \ \forall t\in[-T,T]
\end{equation}
where $C$ depends on $\Omega,\ T,\ \vv{P}_1$ and $P_0$.
\end{theorem}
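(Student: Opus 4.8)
The plan is to prove \eqref{EE} by the energy (multiplier) method: derive a differential inequality for $\mathbb{E}_w(t)$ and close it with Gr\"onwall's inequality. Since the datum $w_0$ is only assumed in $H^1_0(\Omega)$, I would first establish the estimate for smooth solutions (via a Galerkin approximation, or by density of smooth data using continuous dependence), so that all the integrations by parts below are legitimate, and then pass to the limit. Throughout I work on $[-T,T]$, running the argument forward on $[0,T]$ and backward on $[-T,0]$ from the time $t=0$ at which the datum is prescribed.

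For the $L^2$-part, multiply \eqref{genPDE} by $\overline w$, integrate over $\Omega$, and take imaginary parts. The principal term drops, since after Green's formula and the homogeneous condition $w|_\Sigma=0$ the quantity $\int_\Omega\overline w\,\Delta_g w=-\int_\Omega|Dw|^2$ is real, and the zeroth-order term $P_0|w|^2$ is real as well. This leaves $\tfrac12\tfrac{d}{dt}\|w\|_{L^2(\Omega)}^2=-\mathrm{Im}\int_\Omega\overline w\,G(w)+\mathrm{Im}\int_\Omega\overline w\,F$, and by \eqref{LoTFunBound} with Cauchy--Schwarz and Young I obtain $\tfrac{d}{dt}\|w\|_{L^2(\Omega)}^2\le C\,\mathbb{E}_w(t)+\|F\|_{L^2(\Omega)}^2$. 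For the gradient part, multiply \eqref{genPDE} instead by $\overline{w_t}$ and take real parts: $\mathrm{Re}(i\|w_t\|^2)=0$, the principal term becomes $\mathrm{Re}\int_\Omega\overline{w_t}\,\Delta_g w=-\tfrac12\tfrac{d}{dt}\|Dw\|_{L^2(\Omega)}^2$ (again using $w_t|_\Sigma=0$), and the zeroth-order term is a perfect time derivative $\tfrac12\tfrac{d}{dt}\int_\Omega P_0|w|^2$. Thus $\tfrac12\tfrac{d}{dt}\big[\|Dw\|_{L^2(\Omega)}^2-\int_\Omega P_0|w|^2\big]=\mathrm{Re}\int_\Omega\overline{w_t}\langle\vv P_1,Dw\rangle-\mathrm{Re}\int_\Omega\overline{w_t}\,F$. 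The forcing term I would integrate over $[0,t]$ and integrate by parts in $t$, moving the time derivative off $w$ onto $F$; this is exactly where $F\in H^1(-T,T;L^2(\Omega))$ is needed, and the endpoint contributions are absorbed as $\varepsilon\|Dw(t)\|_{L^2(\Omega)}^2$ plus data.

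The main obstacle is the first-order term $\mathrm{Re}\int_\Omega\overline{w_t}\langle\vv P_1,Dw\rangle$, and I expect it to be the crux of the whole proof. Unlike the forcing term it cannot simply be integrated by parts in time, since doing so reproduces the same quantity and is circular; and bounding it crudely by $\|w_t\|_{L^2(\Omega)}\|Dw\|_{L^2(\Omega)}$ would require controlling $\|w_t\|_{L^2(\Omega)}$, which is unavailable at the $H^1$ level (it would need $\Delta_g w\in L^2(\Omega)$, i.e. $H^2$ regularity). Indeed, on an unbounded domain the analogous estimate is genuinely false for a real first-order term, so the bounded geometry must be used. I would resolve it using the real-valuedness of $\vv P_1$ together with the boundedness of $\Omega$: a gauge substitution $w=e^{\psi}\widetilde w$ chosen so that $2D\psi$ cancels the gradient part of $\vv P_1$ turns the worst contribution into lower-order terms plus a boundary term $\int_\Gamma|\langle Dw,\nu\rangle|^2\langle\vv P_1,\nu\rangle\,d\Gamma$ that is real and therefore harmless; the remaining dynamics is a relatively bounded perturbation of the $L^2$-conservative flow generated by $i\Delta_g$, which on the bounded domain $\Omega$ stays bounded on $H^1_0(\Omega)$ over the finite interval $[-T,T]$. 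Since $\psi$ and $e^{\pm\psi}$ are bounded with bounded derivatives on $\overline\Omega$, this produces a constant depending on $\vv P_1$ and $P_0$, as claimed.

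Finally, adding the $L^2$ and gradient inequalities gives, for $t\in[0,T]$, an estimate of the form $\mathbb{E}_w(t)\le \mathbb{E}_w(0)+C\!\int_0^t\!\mathbb{E}_w(s)\,ds+C\|F\|_{H^1(-T,T;L^2(\Omega))}^2$, after absorbing the $\varepsilon$-terms on the left and using $\int_\Omega P_0|w|^2\le\|P_0\|_{L^\infty}\|w\|_{L^2(\Omega)}^2$. Gr\"onwall's inequality then yields $\mathbb{E}_w(t)\le C\big(\mathbb{E}_w(0)+\|F\|_{H^1(-T,T;L^2(\Omega))}^2\big)$, and since $\mathbb{E}_w(0)\le C\|w_0\|_{H^1_0(\Omega)}^2$ and the interval $[-T,0]$ is handled identically, \eqref{EE} follows for all $t\in[-T,T]$.
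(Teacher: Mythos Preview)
Your overall plan---multiply by $\overline w$ for the $L^2$ part, by $\overline{w_t}$ for the gradient part, integrate the forcing term by parts in $t$ to trade $\overline{w_t}F$ for $\overline{w}F_t$, and close with Gr\"onwall---is exactly the skeleton of the paper's proof. The gap is precisely at the step you flag as the crux: the treatment of $\mathrm{Re}\int_\Omega\overline{w_t}\,\langle\vv P_1,Dw\rangle$.

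The gauge substitution does not do what you claim. Setting $w=e^{\psi}\widetilde w$ with $2D\psi$ equal to minus the gradient part of $\vv P_1$ only replaces the first-order coefficient by its divergence-free remainder; it does \emph{not} turn the first-order term into lower-order terms, and it does not by itself produce any boundary integral. After the substitution you face exactly the same obstruction with $\vv P_1$ replaced by its solenoidal part, and the appeal to ``a relatively bounded perturbation of the $L^2$-conservative flow'' does not close the $H^1$ estimate: relative $\Delta_g$-boundedness of $\langle\vv Q,D\cdot\rangle$ on $L^2$ gives $L^2$ well-posedness, not a Gr\"onwall-ready bound on $\|Dw(t)\|_{L^2}$. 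The boundary term you wrote down is also not something a gauge change generates.

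The paper resolves this term by a direct manipulation you did not list: use the equation itself to eliminate $\overline{w_t}$, writing $\overline{w_t}=i\bigl(\Delta_g\overline w+\overline{G(w)}-\overline F\bigr)$. The dangerous piece becomes $-\mathrm{Re}\int_\Omega i\,\langle\vv P_1,Dw\rangle\,\Delta_g\overline w$. One application of Green's formula together with the identity
\[
\langle D\overline w,\,D\langle H,Dw\rangle\rangle=\langle D_{D\overline w}H,Dw\rangle+\tfrac12\langle H,D(|Dw|^2)\rangle
\]
reduces this---after discarding the purely imaginary contributions, which is exactly where your boundary integral $\int_\Gamma\langle\vv P_1,\nu\rangle\,|\langle Dw,\nu\rangle|^2$ appears and is killed by the factor $i$---to $-\int_\Omega\mathrm{Re}\bigl[i\,\langle D_{D\overline w}\vv P_1,Dw\rangle\bigr]$, which is pointwise bounded by $C|Dw|^2$ since $\vv P_1\in C^1$. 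That feeds straight into Gr\"onwall. So the boundary term you identified is the right one and is indeed harmless for the reason you state; it just arises from substituting the equation and integrating by parts in space, not from a gauge transformation.
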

\begin{proof}
First, let us consider the case where $t\in[0,T]$.
We start with the Schr\"{o}dinger equation \eqref{genPDE}. Multiply \eqref{genPDE} by $\ol{w}$, integrate over $Q^t=[0,t]\times\Omega$, and take the imaginary part, we get
\begin{align*}&
        \int_{Q^t}
         \text{Im}\left[iw_t\ol{w}\right]
        +\text{Im}\left[\Delta_g w\ol{w}\right]
        +\text{Im}\left[\left\langle\vv{P}_1, D w\right\rangle\ol{w}\right]
        +\text{Im}\left[P_0|w|^2\right]
        dQ^t
        =
        \int_{Q^t} \text{Im}\left[F\ol{w}\right]dQ^t.
\end{align*}
Apply Green's identity along with eliminating any purely real part or zero boundary terms, we obtain
\begin{align*}&
            \int_{Q^t} 
         \text{Im}\left[\frac i2 \pt |w|^2 \right]
        +\text{Im}\left[\left\langle\vv{P}_1, D w\right\rangle\ol{w}\right]
         dQ^t
         =\int_{Q^t} \text{Im}\left[F\ol{w}\right]dQ^t.
\end{align*}
Then by Cauchy--Schwartz inequality and (\ref{LoTFunBound}), we arrive at
\begin{align*}&
            \int_{\Omega} 
         |w(t,x)|^2
         d\Omega
        \leq
        \|w_0\|^2_{L^2(\Omega)}
        +C\int_{Q^t} \left|F\right|^2 + \left|D w\right|^2 + \left|w\right|^2dQ^t.
\end{align*}
Applying Gronwall's inequality we then get
\begin{align}
        \int_{\Omega} 
         |w(t,x)|^2
         d\Omega
        \leq
        Ce^{Ct}\left(
        \|w_0\|^2_{L^2(\Omega)}
        +\|F\|^2_{L^2(Q^t)}
        +\|Dw\|^2_{L^2(Q^t)}
         \right)
        .\label{wBound}
\end{align}
Similarly, now we multiply  \eqref{genPDE}  by $\ol{w}_t$, integrate over $Q^t$ and take the real part we have
\begin{align}&
    \int_{Q^t} 
     \text{Re}\left[i|w_t|^2\right]
    +\text{Re}\left[\Delta_g w\ol{w}_t\right]
    +\text{Re}\left[\left\langle \vv{P}_1, D w\right\rangle\ol{w}_t\right]
    +\text{Re}\left[P_0 w\ol{w}_t\right]dQ^t
    =
    \int_{Q^t} \text{Re}\left[F\ol{w}_t\right]dQ^t \label{52}
\end{align}
Apply Green's identity, eliminating the purely imaginary and zero boundary terms, and use the equation \eqref{genPDE} to rewrite the $\ol{w}_t$ term in the third integral term on the left-hand side, we get
\begin{align}&
     \int_{Q^t} 
    -\frac12\pt|D w|^2
    -\text{Re}\left[i\left\langle \vv{P}_1, D w\right\rangle \Delta_g\ol{w}\right]
    -\text{Re}\left[i\left\langle \vv{P}_1, D w\right\rangle P_0\ol{w}\right]\nonumber
    \\&\hspace{2cm}
    +\text{Re}\left[i\left\langle \vv{P}_1, D w\right\rangle \ol{F}\right]
    +P_0\frac12\pt|w|^2dQ^t
    =
    -\int_{Q^t} \text{Re}\left[F_t\ol{w}\right]dQ^t.\label{RealEnergyPause1}
\end{align}
Now for the second term on the left-hand side of (\ref{RealEnergyPause1}), by a combination of Green's identity and the following identity (see, for instance, \cite[Lemma~2.1]{Y1999})
$$\left\langle D\overline{w},D\left\langle H,Dw\right\rangle\right\rangle=\left\langle D_{D\overline{w}}H,Dw\right\rangle+\frac12\left\langle H,D\left(\left|Dw\right|^2\right)\right\rangle$$
where $H:\mathcal{M}\to T_x\mathcal{M}$, and eliminate purely imaginary terms, we get
\begin{align}
    \int_\Omega\text{Re}\left[i\left\langle \vv{P}_1, D w\right\rangle \Delta_g \ol{w}\right]
    &=    -\int_\Omega\text{Re}\left[i\langle D_{D\ol{w}}\vv{P}_1, Dw\rangle\right],\label{LapGrad}
\end{align}
where $\langle D_{X}\vv{P}_1, DY\rangle$ is the Hessian of $\vv{P}_1$ evaluated at $X, Y\in T_x\mathcal{M}$. Applying \eqref{LapGrad} to \eqref{RealEnergyPause1}, we have
\begin{eqnarray*}
     & \ &    \int_{\Omega}|D w(t,x)|^2d\Omega \\[2mm]
  & = &\int_{\Omega}|D w_0|^2+P_0|w(t,x)|^2-P_0|w_0|^2d\Omega-\int_{Q^t} 2\text{Re}\left[i\langle D_{D\ol{w}}(\vv{P}_1), Dw\rangle\right]dQ^t \\[2mm]
   & \ & +\int_{Q^t} -2\text{Re}\left[i\left\langle \vv{P}_1, D w\right\rangle P_0\ol{w}\right]+2\text{Re}\left[i\left\langle \vv{P}_1, D w\right\rangle \ol{F}\right]+2\text{Re}\left[F_t\ol{w}\right]dQ^t.
\end{eqnarray*}
Then by \eqref{LoTFunBound}, \eqref{wBound} and the Poincar\'e inequality, we have
\begin{align*}&
     \int_{\Omega}|D w(t,x)|^2d\Omega\leq Ce^{Ct}\left(\|w_0\|^2_{H^1_0(\Omega)}+\|F\|^2_{H^1(0,t;L^2(\Omega))}+\int_{Q^t} |D w|^2 dQ^t\right).
\end{align*}
With Gronwall's inequality we then get
\begin{align}&
     \int_{\Omega}|D w(t,x)|^2d\Omega\leq  Ce^{Ct}\exp(Ce^{Ct})\left(\|w_0\|^2_{H^1_0(\Omega)}+\|F\|^2_{H^1(0,t;L^2(\Omega))}\right).\label{GwBound}
\end{align}
Using \eqref{wBound} and \eqref{GwBound} on $\mathbb{E}_w(t)$ and  we have,
\begin{align*}
    \mathbb{E}_w(t)
    =&
    \int_\Omega \left|w(t,x)\right|^2 + \left|D w(t,x)\right|^2 d\Omega
        \\\leq&
    Ce^{Ct}\exp(Ce^{Ct})
    \left(
    \|w_0\|^2_{H^1(\Omega)}
    +t\|F\|^2_{H^1(0,t;L^2(\Omega))}
    \right).
\end{align*}
This implies the desired estimate (\ref{EE}) with $t\in [0,T]$. In an almost identical fashion we can also get the same estimate for $t\in[-T,0]$.
\end{proof}
\begin{rem}
The regularity on $F$ can be replace by $L^2(-T,T; H^{1}(\Omega))$ for which the corresponding $\|F\|^2_{L^2(-T,T; H^{1}(\Omega))}$ term will show up in the estimate (\ref{EE}) instead of $\|F\|^2_{H^{1}(-T,T;L^2(\Omega))}$.
This may be seen from (\ref{52}) if we also use the equation \eqref{genPDE} to rewrite the $\ol{w}_t$ term on the right-hand side, instead of doing integration by parts with $t$.
\end{rem}


\end{document}